\newif\ifPDF
\newtheorem{theorem}{Theorem}[section]
\newenvironment{proof}{\noindent{\bf Proof:}}{
    \hspace*{\fill} $\square$\medskip }
\newcommand{\be}{\mathbf e}
\newcommand{\E}{\varepsilon} 
\newcommand{\T}{\tau}
\newcommand{\G}{{\mathcal G}}
\newcommand{\Q}{{g_{_G}}}
\newcommand{\eps}{\varepsilon}
\newcommand{\old}[1]{}
\renewcommand{\be}{\begin{equation}}
\newcommand{\ee}{\end{equation}}
\title{The phases of large networks with
edge and triangle constraints}
\author{
Richard Kenyon\thanks{Department of Mathematics, Brown University, Providence, RI 02912; rkenyon@math.brown.edu}
\and Charles Radin\thanks{Department of Mathematics, University of Texas, Austin, TX 78712; radin@math.utexas.edu}  
\and Kui Ren \thanks{Department of Mathematics, University of Texas, Austin, TX 78712; ren@math.utexas.edu} 
\and Lorenzo Sadun\thanks{Department of Mathematics, University of Texas, Austin, TX 78712; sadun@math.utexas.edu} 
}
\begin{document}
\maketitle

\begin{abstract}
\noindent
Based on numerical simulation and local stability analysis we describe the structure of the phase space of
the edge/triangle model of random graphs. We support 
simulation evidence with mathematical proof of continuity and discontinuity for many of the phase transitions.
All but one of the
many phase transitions in this model break some form of symmetry, and we use this
model to explore how changes in symmetry are related to discontinuities at these transitions. \end{abstract}



\section{Introduction}
\label{SEC:Intro}

We use the variational formalism of equilibrium statistical mechanics
to analyze large random graphs. More specifically we analyze ``emergent
phases", which represent the (nonrandom) large-scale structure of
typical large graphs under 
global constraints on subgraph densities. We concentrate on
the model with edge and triangle density constraints, $\E$ and $\T$,
which in this context play somewhat similar roles that mass and energy density constraints play in
microcanonical models of simple materials. Our goal is to understand the statistical states
$g_{\E,\T}$ which maximize entropy for given $\E$ and $\T$. These are the
analogues of Gibbs states in statistical mechanics.

Other parametric models of random graphs are
widely used, in particular exponential random graph models (analogues
of grand canonical models of materials), and the edge/triangle
constraints have been studied in that formalism since
they were popularized by Strauss in 1986 \cite{St}. (There is a short discussion
of exponential models in Section \ref{SEC:notation}.)

This paper, following on
\cite{RS1,RS2,RRS1,KRRS1,KRRS2,RRS2}, is the first attempt to
determine the qualitative features of  the whole phase space of the
edge/triangle model. We discuss what phases exist, and give the basic features of the
transitions between these phases; see Figure \ref{phase-diagram}.

\begin{figure}[htbp]
\center{\includegraphics[width=5in]{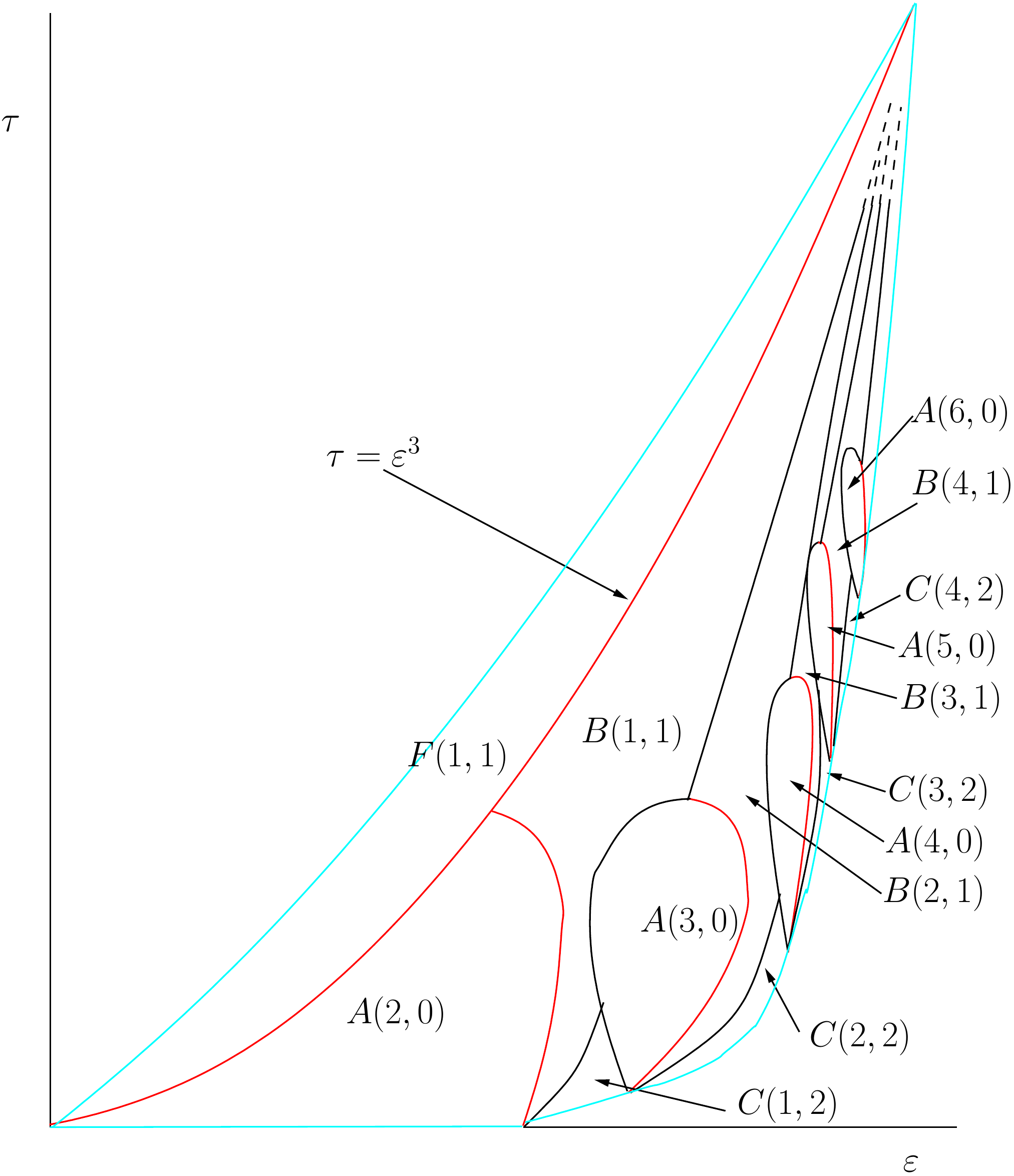}}
\caption{Distorted sketch of 14 of the phases in the edge/triangle model. 
Continuous phase transitions are shown in red, and discontinuous
phase transitions in black.}
\label{phase-diagram}
\end{figure}

The phase space $\Gamma$ is the space of achievable values of the
constraints, and a phase is a connected open subset of $\Gamma$
in which the entropy optimizing $g_{\E,\T}$ are unique for given $(\E,\T)$, and
vary analytically with
$(\E,\T)$.  In the case of edge/triangle constraints $\Gamma$ is the
``scalloped triangle" of Razborov \cite{R3} (see Figure~\ref{Razborov-triangle}).

Determining the optimal states in the edge/triangle model is a difficult
variational problem, one that has not been 
solved rigorously except in special 
regions of the phase space \cite{RS1,RS2,RRS1,KRRS1,KRRS2,RRS2}. However there is 
solid evidence that each individual optimizer
is \emph{multipodal}, that is, described by a stochastic block model (see definition below).
This evidence is borne out by our numerical studies, and based on them we conjecture that the optimal states
occur in three families (plus one extra phase), corresponding to certain 
equivalences or symmetries, as follows. We will give the evidence for
our conjectures as we proceed.

In the region $\T<\E^3$ of $\Gamma$ there are three infinite families of
phases, denoted $A(n,0)$, $B(n,1)$ and $C(n,2)$; See Figure~\ref{phase-diagram}.
Each statistical state in an $A(n,0)$ phase corresponds to a partition
of the set $\Sigma$ of nodes into $n\ge 2$ subsets $\Sigma_j$ which are 
{\it equivalent} in the sense that the sizes of all $\Sigma_j$ are $1/n$
of the whole,
the probability $p_{jk}$ of an edge between $v\in\Sigma_j$ and $w\in\Sigma_k$
is independent of $j$ and $k$, only depending on whether $j=k$ or $j\ne k$. 
It follows that for an $A(n,0)$ phase there are only two 
statistical parameters $p_{11}$ and $p_{12}$, which are thus easily computable functions of the edge and
triangle constraints, $\E$ and $\T$. See for example Figure \ref{phaseA} for the case $A(2,0)$,
and the left graphon in Figure \ref{ABCexample}.
\begin{figure}[htbp]
\center{\includegraphics[width=5in]{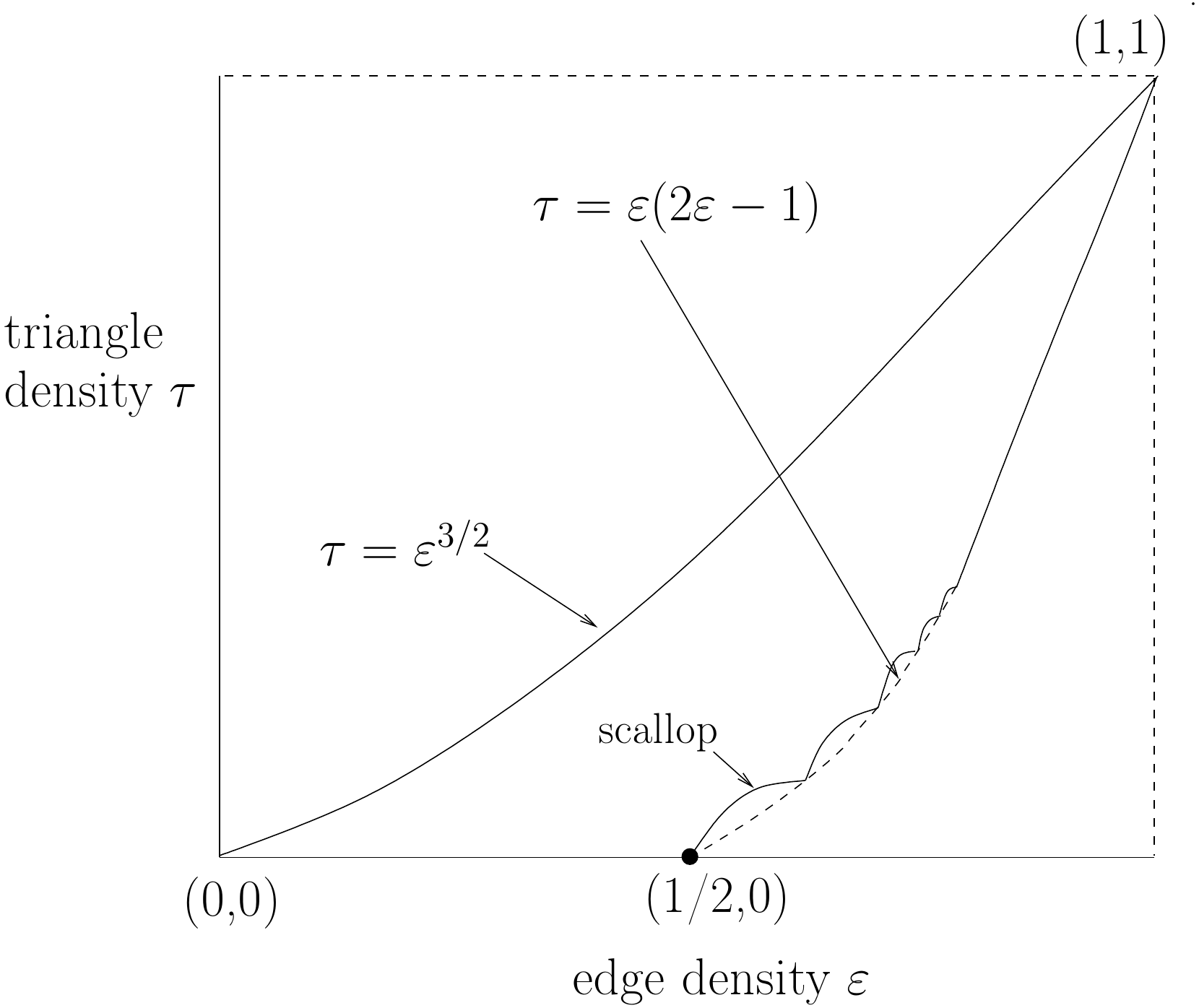}}
\caption{Distorted sketch of Razborov's scalloped triangle. The dotted curve is not part of the region; it just indicates
the quadratic curve on which cusps lie.}
\label{Razborov-triangle}
\end{figure}

The $A(n,0)$ phase touches
the lower boundary of $\Gamma$ at the cusp $(\E,\T)=
\left ( \frac{n}{n+1}, \frac{n(n-1)}{(n+1)^2} \right)$. 

A statistical state in a $B(n,1)$ phase corresponds to a partition of
$\Sigma$ into $n$ statistically equivalent subsets, $n\ge 1$, plus one
other set of statistically equivalent nodes, see Figure
\ref{ABCexample} middle. In the phase diagram, these phases are
arranged in stripes coming out of the point $(\E,\T)=(1,1)$, and part
of the boundary of $B(n,1)$ is shared with $A(n+1,0), A(n+2,0)$ and
with $C(n,2)$. These are depicted schematically in Figure~\ref{phase-diagram}.

A statistical state in a $C(n,2)$ phase corresponds to a nodal
partition into $n$ equivalent subsets, $n\ge 1$,
plus another statistically equivalent pair of subsets, see Figure \ref{ABCexample} right.
The phase
shares boundary with $B(n,1)$, $A(n+2,0)$, and the scallop
connecting the cusps with $\E=n/(n+1)$ and
$\E=(n+1)/(n+2)$. See Figure~\ref{phase-diagram}.

$F(1,1)$ denotes the single phase for the region 
$\T>\E^3$; see Figure~\ref{phase-diagram}. 
A statistical state in
$F(1,1)$ corresponds to a bipodal graphon.
Although this is the same basic structure as $B(1,1)$, the two phases
are different in more subtle ways.

\begin{figure}[htbp]
\center{\includegraphics[width=3in]{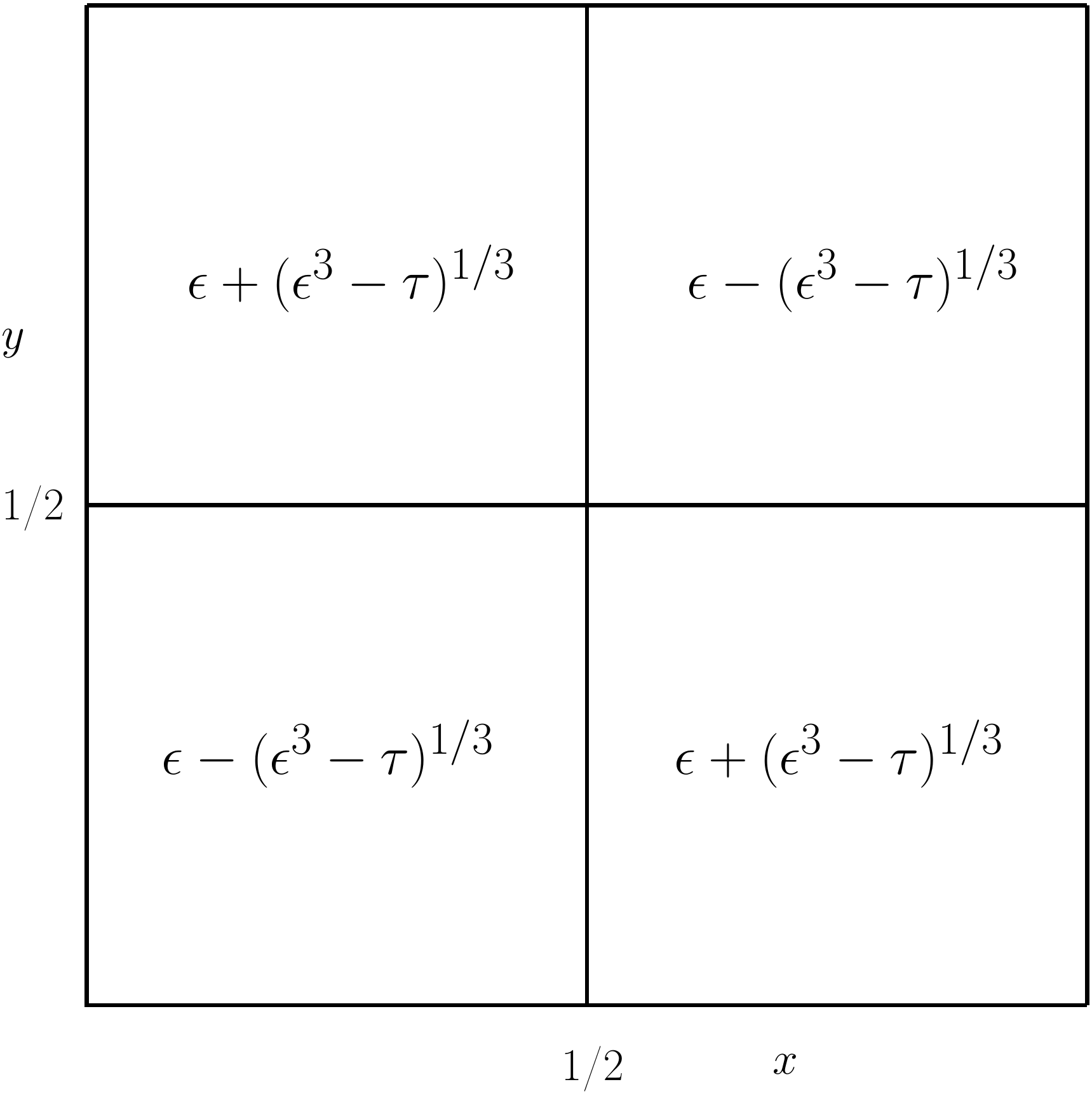}}
\caption{The optimal graphons in phase $A(2,0)$}
\label{phaseA}
\end{figure}

\begin{figure}[htbp]
\center{\includegraphics[width=2in]{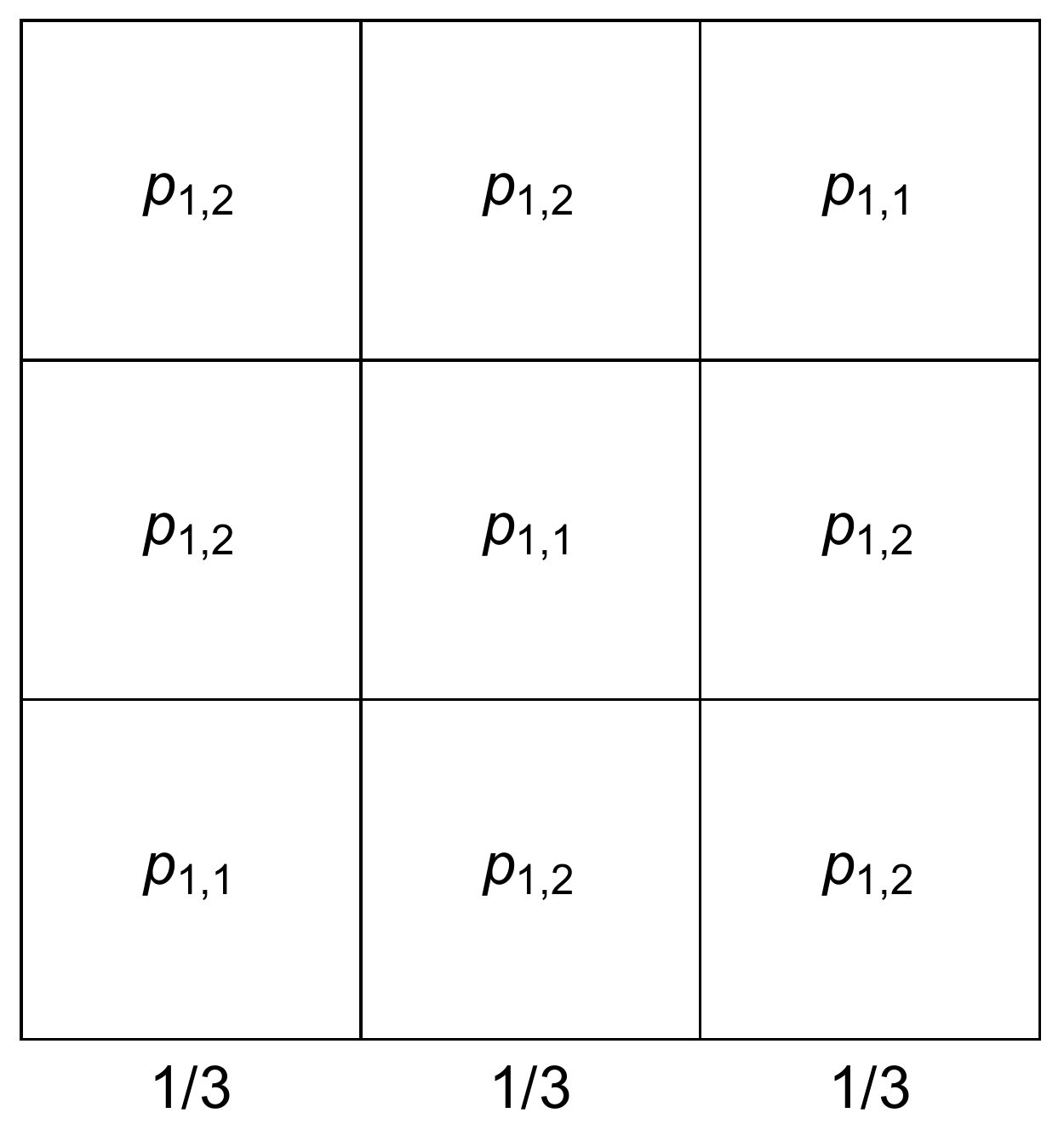}\includegraphics[width=2in]{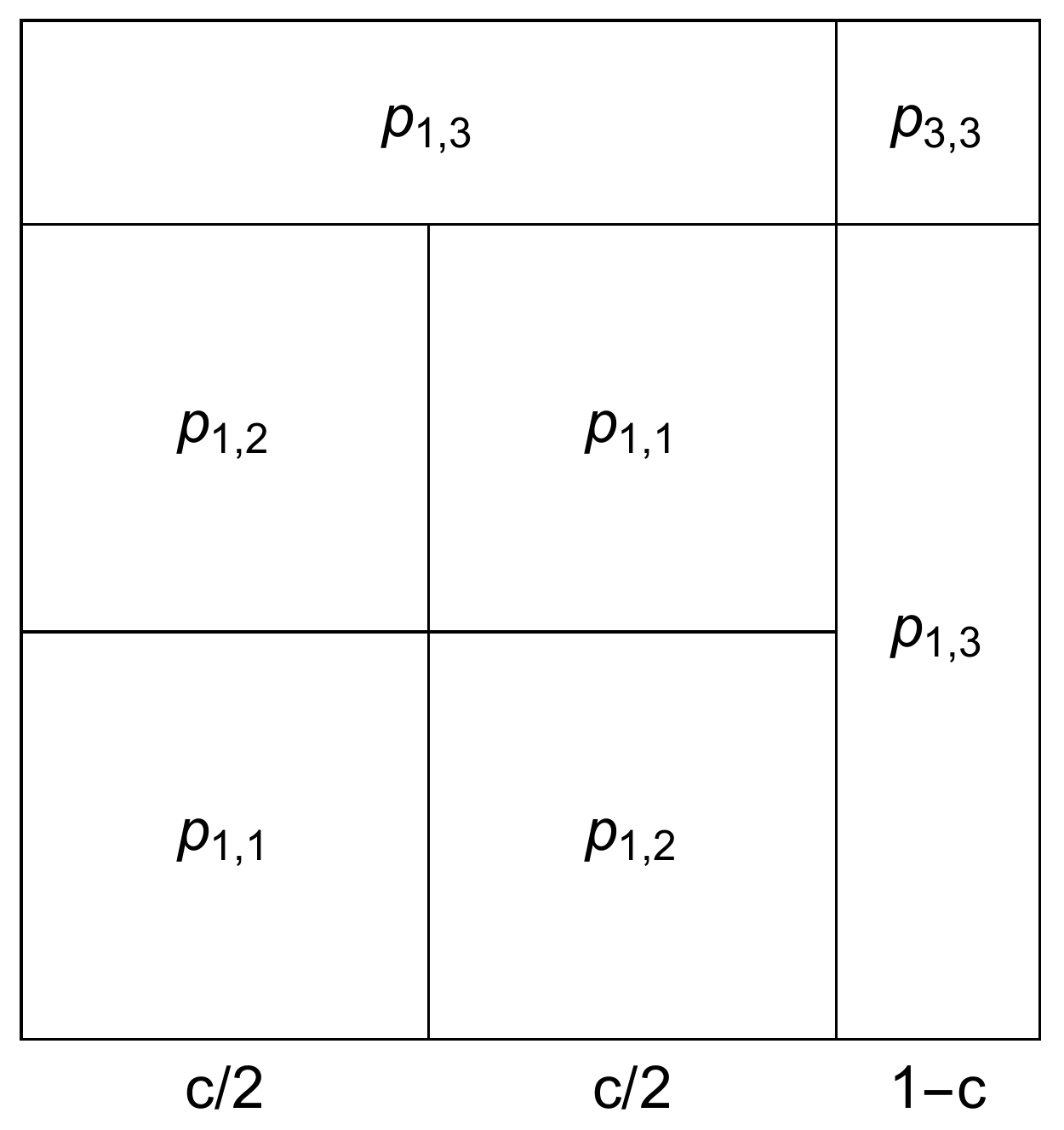}\includegraphics[width=2in]{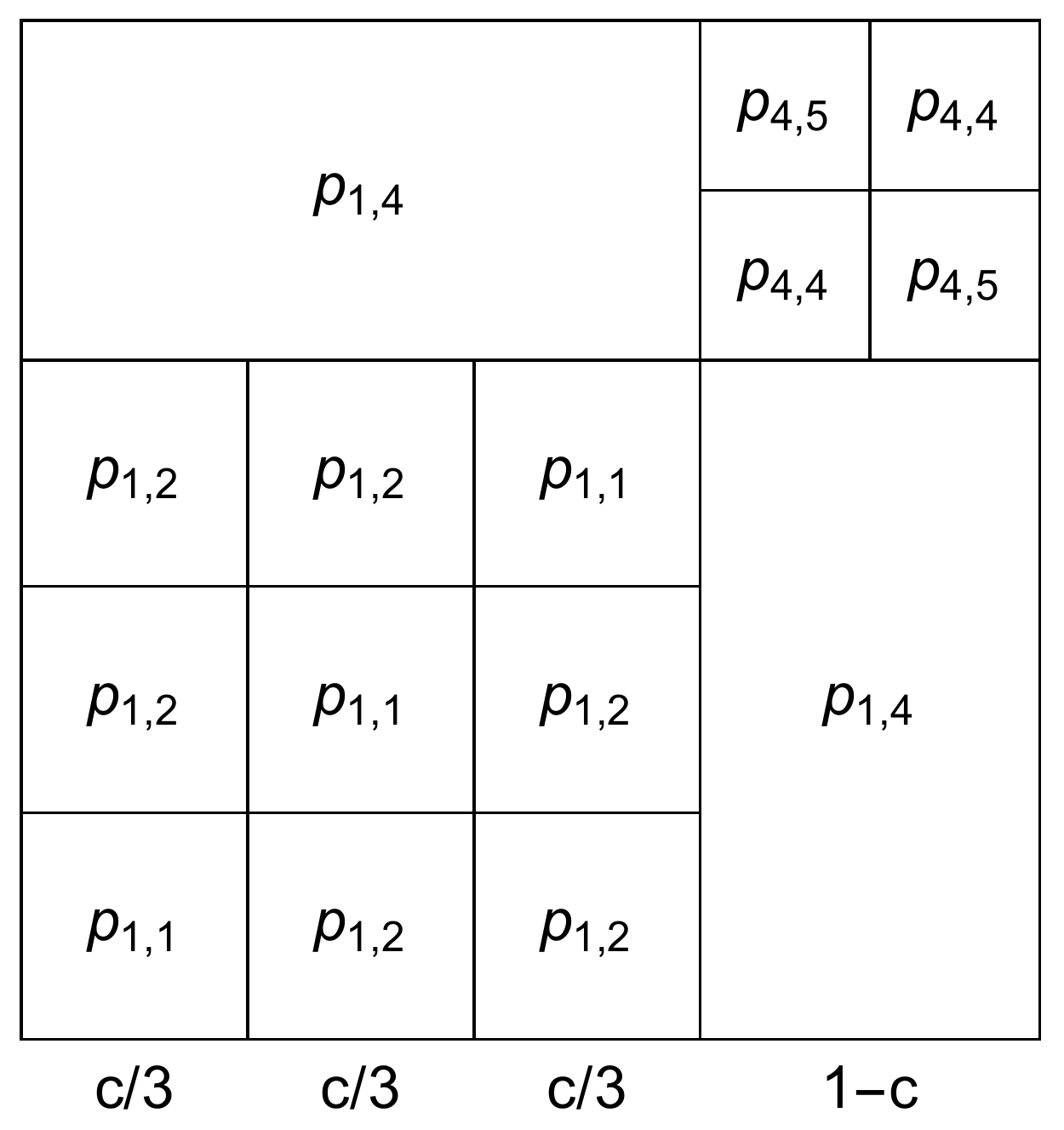}}
\caption{\label{ABCexample}Example of an $A(3,0)$, a  $B(2,1)$ and $C(3,2)$ graphon.}
\end{figure}
The multipodal structure of
the $g_{\E,\T}$ has only been proven on certain subsets of the phase space: on the boundary, on the
Erd\H{o}s-R\'{e}nyi curve $\T=\E^3$, on the line segment $(1/2,\T)$
for $0\le \T\le 1/8$, and in a region just above the
Erd\H{o}s-R\'{e}nyi curve; see \cite{KRRS2}. It has however also been proven
just above the
Erd\H{o}s-R\'{e}nyi curve in a wide family of other models \cite{KRRS2}, for
all parameters in edge/$k$-star models \cite{KRRS1}, and is supported in our
edge/triangle model by
extensive simulations for a range of constraints \cite{RRS1}.

The support for the specific pattern of phases in
Figure~\ref{phase-diagram} is purely from simulation and will be described
in Section~\ref{SEC:numerics}.

In Section~\ref{SEC:transitions} we use this detailed structure of the
optimal states to analyze the transitions between phases. In our
edge/triangle model all phase transitions involve a change of
symmetry. 
Some of the
transitions are continuous and some discontinuous, and we prove some
of these results in that section. The role of symmetry in determining
these qualitative features is of interest, given the major
role that symmetry plays in the understanding of phase transitions in
statistical physics~\cite{LL,An}. We discuss this further in Section~\ref{SEC:symmetry}.


\section{Notation and formalism}
\label{SEC:notation}

The study of large dense
graphs uses the mathematical tool of graphons, which we now review \cite{L}, making use of the
discussion in \cite{Ra}. We let $G_n$ denote the set of graphs on $n$ 
nodes, which we label $\{1,\ldots,n\}$. Graphs are assumed simple, i.e.\ undirected
and without
multiple edges or loops. A graph $G$ in $G_n$ can be represented by
its $0,1$-valued adjacency matrix, or alternatively by the function 
$\Q$ on the unit square $[0,1]^2$ with constant value 0 or 1 in each
of the subsquares of area $1/n^2$ centered at the points $((j-\frac12)/n,
(k-\frac12)/n)$; see Figure~\ref{multipartite}. 

\begin{figure}[htbp]
\center{\includegraphics[width=2.5in]{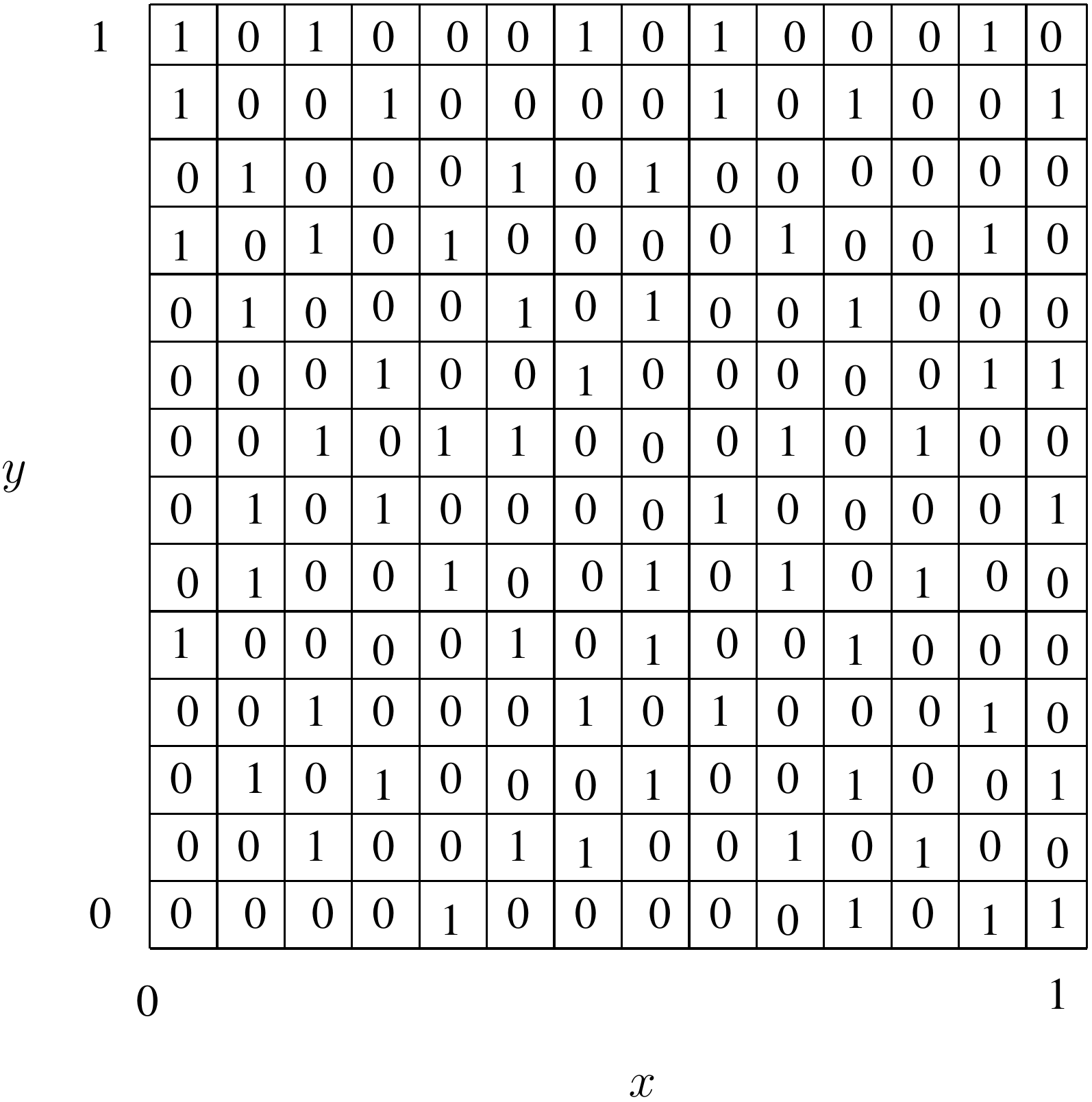}}
\caption{Graph with 14 nodes}
\label{multipartite}
\end{figure}

More generally, a graphon $g\in \G$ is an
arbitrary symmetric
measurable function on $[0,1]^2$ with values in $[0,1]$.
{We define the ``cut metric'' on graphons by}
\be 
{d}(f,g)\equiv \sup_{S,T\subseteq [0,1]}\Big| \int_{S\times
  T}[f(x,y)-g(x,y)]\,dx\,dy\Big|. 
\label{cutmetric}
\ee
Informally, $g(x,y)$ is the probability of an edge between nodes $x$
and $y$, and so two graphons
are called equivalent if they agree up to a 
`node rearrangement', that is, $g(x,y)\sim g(\phi(x),\phi(y))$ where $\phi$ 
is a measure-preserving transformation of $[0,1]$
(see \cite{L} for details). {The cut metric on graphons is invariant under
the action of $\phi$: $d(f\circ \phi, g\circ \phi) = d(f,g)$.} 
We define the cut metric on the quotient space $\tilde \G=\G/\sim$ of `reduced
graphons' to be the infimum of (\ref{cutmetric}) over all representatives
of the given equivalence classes.
$\tilde \G$ is compact in the topology induced by this metric~\cite{L}. 

We now consider the notion of `blowing up' a graph $G$ by replacing each
node with a cluster of $K$ nodes, for some fixed $K=2,3,\ldots$, with
edges inherited as follows: there is an edge between a node in
cluster $V$ (which replaced the node $v$ of $G$) and a node in cluster
$W$ (which replaced node $w$ of $G$) if and only if there is an edge
between $v$ and $w$ in $G$. 
Note that the blowups of a graph are all represented by the same
reduced graphon, and $\Q$ can therefore be considered a graph on
arbitrarily many -- even infinitely many -- nodes, which allows us to
reinterpret Figure~\ref{multipartite} as representing a multipartite graph.
This represents a form of symmetry which we exploit next, and discuss
in Section~\ref{SEC:symmetry}.

The `large scale' features of a graph $G$ on which we focus are the
densities with which various subgraphs $H$ sit in $G$. Assume for
instance that $H$ is a $4$-cycle. We could represent the density of
$H$ in $G$ in terms of the adjacency matrix $A_G$ by
\be
\frac{1}{n(n-1)(n-2)(n-3)} \sum_{w,x,y,z} A_G(w,x)A_G(x,y)A_G(y,z)A_G(z,w),
\ee
where the sum is over distinct nodes $\{w,x,y,z\}$ of $G$. 
For large $n$ this can approximated, within $O(1/n)$, as:

\be
\int_{[0,1]^4} \Q(w,x)\Q(x,y)\Q(y,z)\Q(z,w)\, dw\, dx\, dy\, dz.
\ee
It is therefore useful to define the
density $t_H(g)$ of this $H$ in a graphon $g$ by
\be
t_H(g) = \int_{[0,1]^4} g(w,x)g(x,y)g(y,z)g(z,w)\, dw\, dx\, dy\, dz,
\ee
The density for other subgraphs is defined analogously. 
We note that $t_H(g)$ only depends on the equivalence class of $g$ and is a
continuous function of $g$ with respect to the cut metric 
on reduced graphons. It is an important result of \cite{L} that the densities of subgraphs
are separating: any two reduced graphons with the
same values for all densities $t_H$ are the same.

Our goal is to analyze typical large graphs with
variable edge/triangle constraints in the phase space of Figure
\ref{Razborov-triangle}. Our densities are real numbers, limits of densities
which are attainable in large finite systems, so we begin by softening
the constraints, considering graphs with $n>>1$ nodes and with
edge/triangle densities $(\E^\prime,\T^\prime)$ satisfying
$\E-\delta<\E^\prime<\E+\delta$ and $\T-\delta<\T^\prime<\T+\delta$
for some small $\delta$ (which will eventually disappear.) It is easy to
show that the number of such constrained graphs is of the form $\exp
(sn^2)$, for some $s=s(\E,\T,\delta)>0$ and by a typical graph we mean
one chosen from the uniform distribution on the constrained set.

Getting back to our goal of analyzing constrained uniform
distributions, a related step is to determine the cardinality of the
set of graphs on $n$ vertices subject to the constraints. Constraints
are expressed in terms of a vector $\alpha$ of values of a set $C$ of densities, 
and a small parameter
$\delta$. Denoting the cardinality by $Z_n(\alpha,\delta)$, it was proven
in \cite{RS1, RS2} that $\lim_{\delta\to 0}\lim_{n\to \infty}
(1/n^2)\ln[Z_n(\alpha,\delta)]$ exists; it is called the \emph{constrained
entropy} $s({\alpha})$. As in statistical mechanics this can be
usefully represented via a variational principle.

\begin{theorem}\label{thm:g-variation}
(The variational principle for constrained 
graphs \cite{RS1,RS2})
For any $k$-tuple $\bar H=(H_1,\dots,H_k)$ of subgraphs and $k$-tuple $\alpha=(\alpha_1,\dots,\alpha_k)$ of
numbers,
\be \label{var}
s(\alpha)=\max_{t_{H_j}(q)=\alpha_j} S(g),
\ee
\noindent  where
$S$ is the graphon entropy:
\be \label{Shannon}
S(g)=-\frac12\int_{[0,1]^2} g(x,y)\ln[g(x,y)]+[1-g(x,y)]\ln[1-g(x,y)]\,dx\,dy.
\ee
\end{theorem}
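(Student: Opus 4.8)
The plan is to obtain the variational principle from the large deviation principle of Chatterjee and Varadhan for the dense Erd\H{o}s--R\'enyi ensemble $G(n,1/2)$, together with two facts already recorded above: the compactness of the reduced graphon space $\tilde{\mathcal G}$ in the cut metric, and the cut-continuity of each density $t_{H_j}$. First I set up the bridge between counting and probability. Under $G(n,1/2)$ every graph on $n$ labelled nodes has probability $2^{-\binom n2}$, so if $g_G$ denotes the reduced graphon of the random graph $G$ and $U_\delta := \{g\in\tilde{\mathcal G} : |t_{H_j}(g)-\alpha_j|<\delta \text{ for all } j\}$, then $Z_n(\alpha,\delta)=2^{\binom n2}\,\mathbb P(g_G\in U_\delta)$. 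Taking logarithms, $\tfrac1{n^2}\ln Z_n(\alpha,\delta)=\tfrac1{n^2}\binom n2\ln 2+\tfrac1{n^2}\ln\mathbb P(g_G\in U_\delta)$, whose first term tends to $\tfrac12\ln 2$.

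Next I invoke the large deviation principle: the laws of $g_G$ under $G(n,1/2)$ satisfy an LDP in the cut metric at speed $n^2$ with good rate function $I(g)=\tfrac12\ln 2-S(g)$, where $S$ is the entropy functional of the theorem. Because each $t_{H_j}$ is cut-continuous, $U_\delta$ is open and $F_\delta:=\{g:|t_{H_j}(g)-\alpha_j|\le\delta \ \forall j\}$ is closed, with $U_\delta\subseteq F_\delta$. Applying the LDP lower bound on the open set $U_\delta$ and the upper bound on the closed set $F_\delta$, and feeding both through the displayed identity (so that the $\tfrac12\ln 2$ terms cancel against $-I$), yields the sandwich
\[
\sup_{g\in U_\delta}S(g)\ \le\ \liminf_{n\to\infty}\tfrac1{n^2}\ln Z_n(\alpha,\delta)\ \le\ \limsup_{n\to\infty}\tfrac1{n^2}\ln Z_n(\alpha,\delta)\ \le\ \sup_{g\in F_\delta}S(g).
\]

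Finally I let $\delta\to0$. Writing $K:=\{g:t_{H_j}(g)=\alpha_j\ \forall j\}$, continuity of the densities makes $K$ a closed, hence compact, subset of $\tilde{\mathcal G}$, and $S$ is upper semicontinuous in the cut metric (this follows from concavity of $u\mapsto-u\ln u-(1-u)\ln(1-u)$ via Jensen, since block-averaging a graphon only raises $S$, together with martingale convergence along refining partitions), so $S$ attains its maximum on $K$. The sets $F_\delta$ decrease to $K$; taking maximizers $g_\delta\in F_\delta$, extracting a cut-convergent subsequence $g_\delta\to g^\ast$, using density-continuity to place $g^\ast\in K$ and upper semicontinuity to get $S(g^\ast)\ge\lim_\delta\max_{F_\delta}S$, shows $\lim_{\delta\to0}\sup_{F_\delta}S=\max_K S$. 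Since $K\subseteq U_\delta$ gives $\sup_{U_\delta}S\ge\max_K S$ for every $\delta$, the lower end of the sandwich also converges to $\max_K S$; hence the double limit defining $s(\alpha)$ equals $\max_{t_{H_j}(g)=\alpha_j}S(g)$.

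The main obstacle is the large deviation principle itself --- equivalently, the sharp counting estimate that the number of graphs on $n$ nodes whose reduced graphon lies within cut-distance $\varepsilon$ of a fixed $g$ is $\exp\!\big(n^2 S(g)+o(n^2)\big)$ as $n\to\infty$ and $\varepsilon\to0$. Everything after that is soft: compactness of $\tilde{\mathcal G}$, continuity of the $t_{H_j}$, and upper semicontinuity of $S$. Two points deserve care: verifying the upper semicontinuity of $S$ in the (weak) cut metric, and checking that $K$ is nonempty so the maximum is genuinely attained --- the latter holds precisely when $\alpha$ lies in the achievable region $\Gamma$, to which one restricts.
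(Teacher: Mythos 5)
Your proposal is correct, and it is essentially the paper's own route: the paper does not prove Theorem~\ref{thm:g-variation} but imports it from \cite{RS1,RS2}, and the proof there is exactly what you describe --- convert the counting problem to probabilities under $G(n,1/2)$, invoke the Chatterjee--Varadhan large deviation principle in the cut metric with rate function $\tfrac12\ln 2 - S(g)$, sandwich $\tfrac{1}{n^2}\ln Z_n(\alpha,\delta)$ between suprema of $S$ over open and closed density neighborhoods, and pass to $\delta\to 0$ using compactness of $\tilde\G$, continuity of the $t_{H_j}$, and upper semicontinuity of $S$. The only detail you gloss over is the $O(1/n)$ discrepancy between finite-graph subgraph densities and graphon densities, which is routinely absorbed by the $\delta$-sandwich and does not affect correctness.
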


Variational principles such as Theorem \ref{thm:g-variation} are well
known in statistical mechanics \cite{Ru1,Ru2,Ru3}.  
One aspect of such a variational principle in the current
context is not well understood, and that is the uniqueness of the optimizer.
In all known graph examples the
optimizers in the variational principle are unique in $\tilde \G$
except on a lower dimensional set of constraints where
there is a phase transition. Assuming there is a unique optimizer for
(\ref{var}), this optimizer is the limiting constrained uniform distribution.  
We will discuss the question of uniqueness of entropy optimizers 
in Section~\ref{SEC:symmetry}.

Finally we contrast the above formalism with the formalism of exponential random graph
models (ERGM's) mentioned in the Introduction. The latter are widely
used, especially in the social sciences, to model graphs on a fixed,
small number of nodes \cite{N}. They are sometimes considered as
Legendre transforms of the models being discussed in this paper
\cite{RS1}. However, as was pointed out in \cite{CD}, this use of Legendre transform is largely problematic,
since the constrained entropy in these models is neither convex nor concave, and the Legendre
transform is therefore not invertible \cite{RS1}.
As a consequence
the parameters in ERGM's become redundant, and
this confuses any interpretation of phases or phase transitions in such
models. 
\section{Numerical simulation}
\label{SEC:numerics}

We now briefly describe the computational algorithms that we used to obtain the phase portrait sketched in Figure~\ref{phase-diagram}. The details of the algorithms, as well as their benchmark with known analytical results, can be found in~\cite{RRS1}. In the algorithms, we assume that the entropy maximizing graphons are at most $16$-podal, the largest number of podes that our computational power can handle. We then draw random samples from the space of $16$-podal graphons, select those that satisfy, besides the symmetry constraints, the constraints on edge and triangle densities up to given accuracy. We compute the entropy of these selected graphons and take the ones with maximal entropy as the optimizing graphons.

For each given $(\eps,\tau)$ value, we generate a large number of samples such that the optimizing graphons we obtain have entropy values that are within a given accuracy to the true values. The computational complexity of such a procedure is extremely high for high accuracy computations. For each optimizing graphon which we determined with the sampling algorithm, we use it as the initial guess for a Netwon-type local optimization algorithm, more precisely a sequential quadratic programming (SQP) algorithm, to search for local improvements. Detailed analysis of the computational complexity of the sampling algorithm and the implementation of the SQP algorithm are documented in ~\cite{RRS1}. The results of the SQP algorithms are the candidate optimizing graphons that we show in the rest of this section.

\begin{figure}[!htb]
\centering
\includegraphics[width=0.675\textwidth,height=0.615\textwidth]{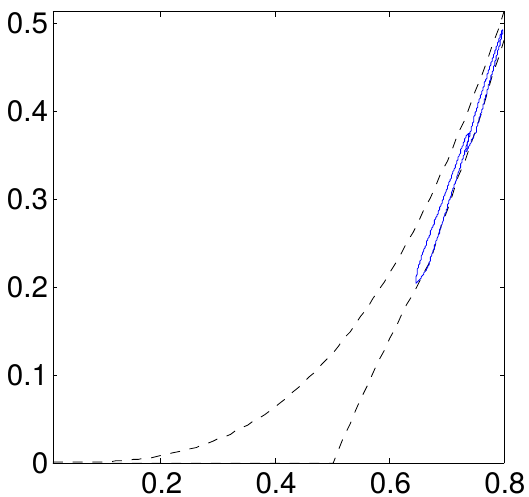}
\caption{Boundaries of the $A(3,0)$ and the $A(4,0)$ phases, in solid blue lines, as determined by numerical simulations. 
The black dashed lines are the Erd\H{o}s-R\'{e}nyi curve (upper) and Razborov scallops (bottom).}
\label{FIG:N-0 Boundary}
\end{figure}

\paragraph{The $A(n,0)$ phase boundaries.} 
In the first group of numerical simulations we try to determine the
boundaries of two of the completely symmetric phases, the $A(3,0)$
phase and the $A(4,0)$ phase. As we pointed out in the Introduction,
for a given $(\eps, \tau)$ value in $A(n,0)$ we know analytically the unique
expression for the optimal $A(n,0)$ graphon (see, for instance,
equation (\ref{a-and-b})) and the corresponding entropy. Therefore, a given
point $(\eps, \tau)$ is outside of the $A(n, 0)$ phase if we can find
a graphon that gives a larger entropy value, and is within
the $A(n, 0)$ phase if we can not find a graphon that has a
larger value. We use this strategy to determine the boundaries
of the $A(3,0)$ and the $A(4, 0)$ phases. In 
Figure~\ref{FIG:N-0 Boundary} we show the boundaries we determined
numerically. Note that since these boundaries are determined using a
mesh on the $(\eps, \tau)$ plane, they are only accurate
up to the mesh size in each direction. 
For better visualization, we reproduced
Figure~\ref{FIG:N-0 Boundary} in a rescaled coordinate system, $(\eps,
\tau'=\tau-\eps(2\eps-1))$, in Figure~\ref{FIG:N-0 Boundary
  Rescaled}. 
\begin{figure}[!htb]
\centering
\includegraphics[width=0.45\textwidth]{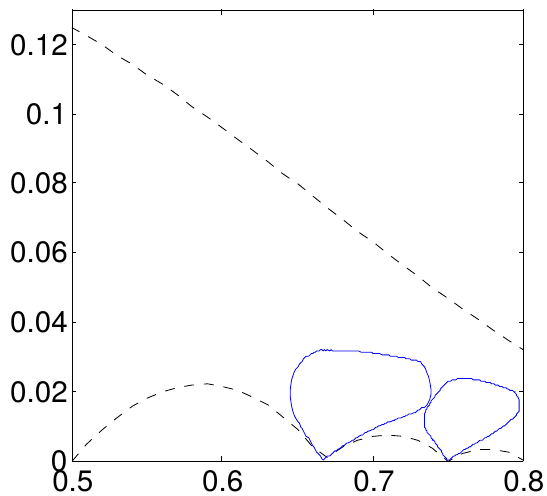}
\caption{Same as in Figure~\ref{FIG:N-0 Boundary} except that the coordinates $(\eps, \tau'=\tau-\eps(2\eps-1))$ are used, and $\eps$ is restricted to
$[0.5, 0.8]$.}
\label{FIG:N-0 Boundary Rescaled}
\end{figure}

\begin{figure}[!htb]
\centering
\includegraphics[width=0.22\textwidth]{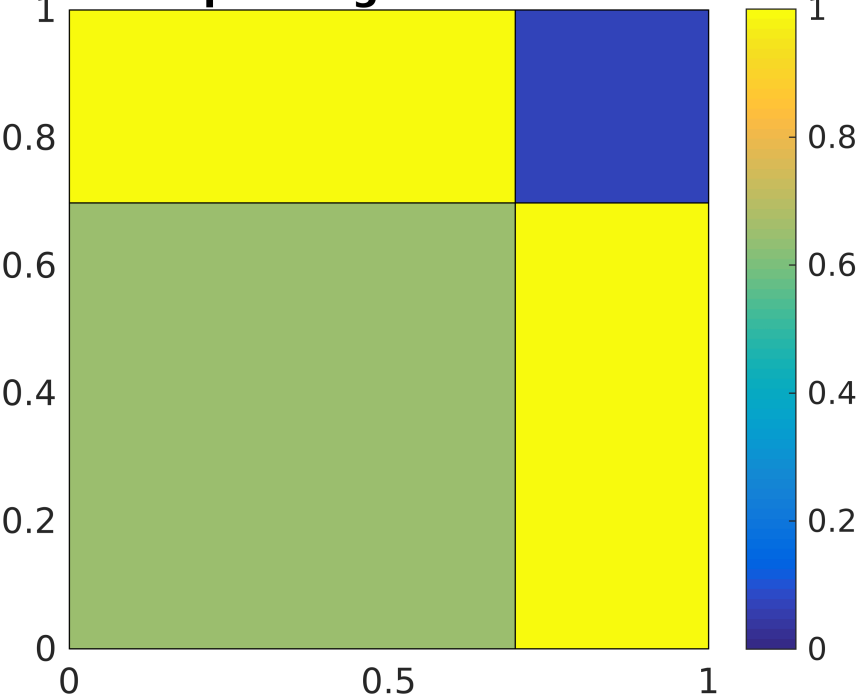}
\includegraphics[width=0.22\textwidth]{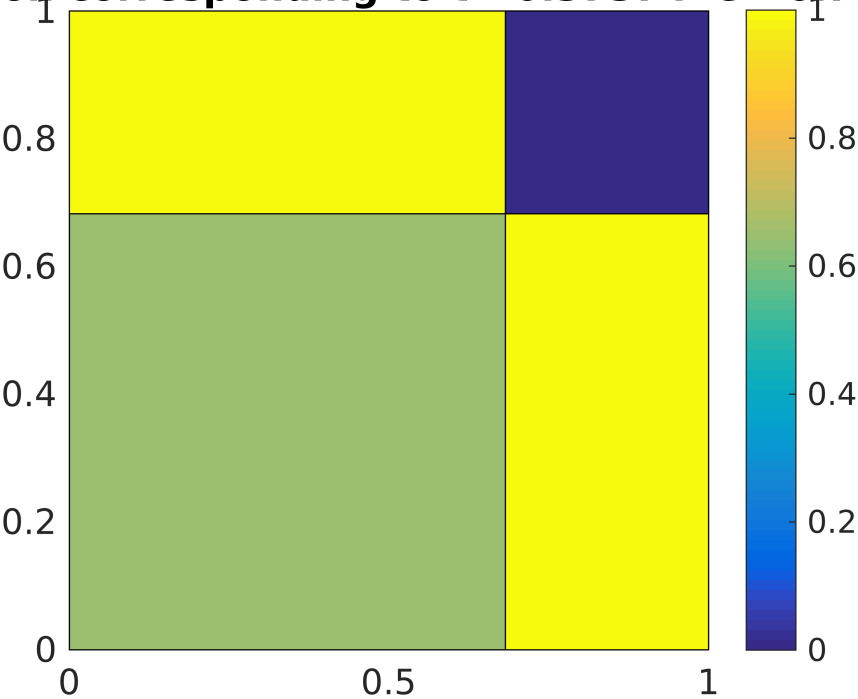}
\includegraphics[width=0.22\textwidth]{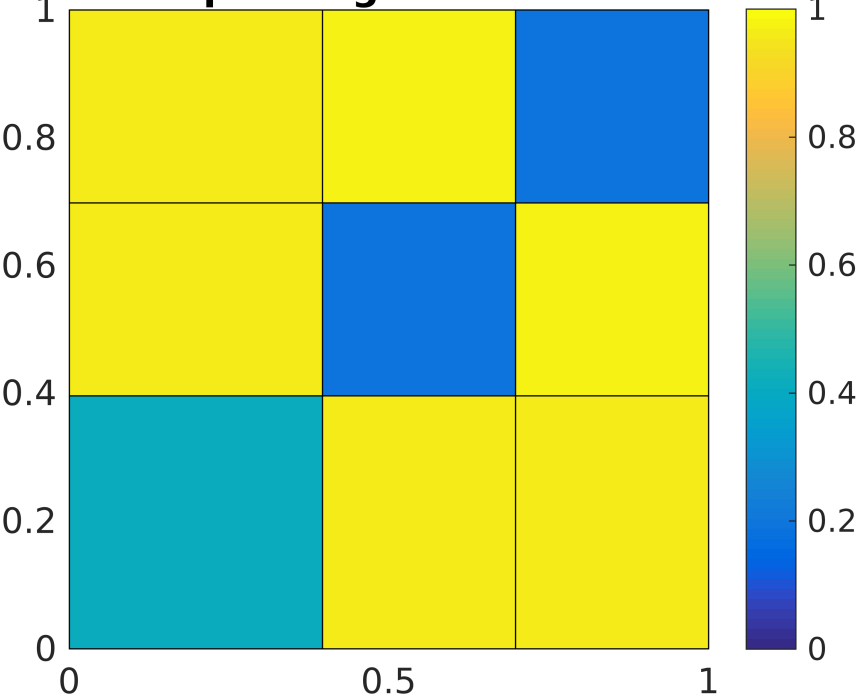}
\includegraphics[width=0.22\textwidth]{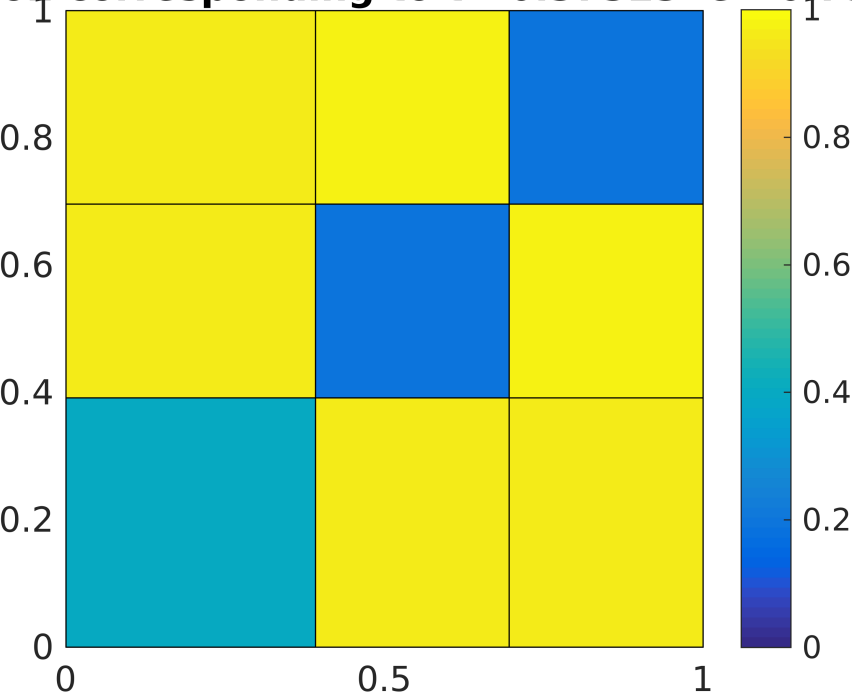}\\
\includegraphics[width=0.22\textwidth]{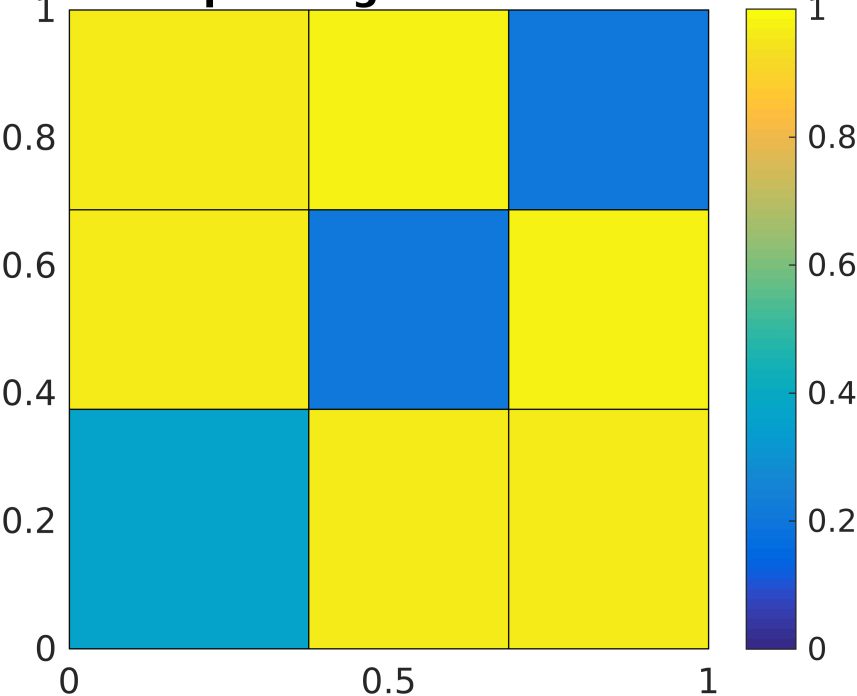}
\includegraphics[width=0.22\textwidth]{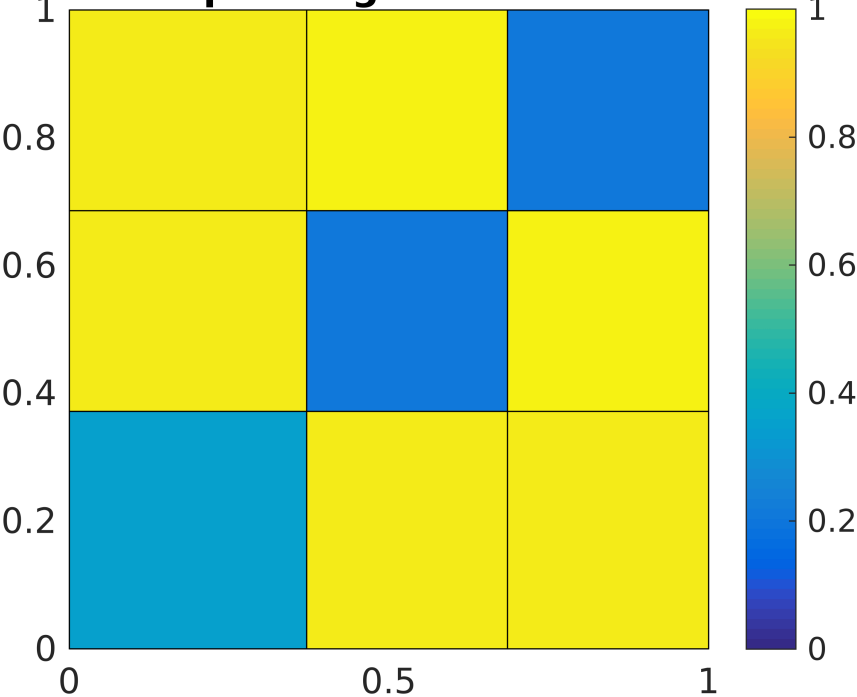}
\includegraphics[width=0.22\textwidth]{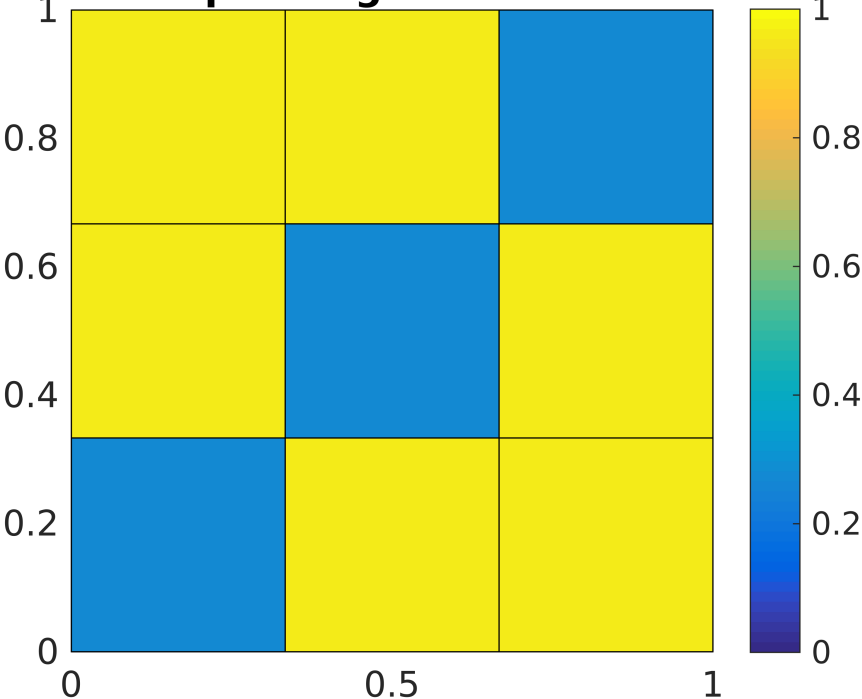}
\includegraphics[width=0.22\textwidth]{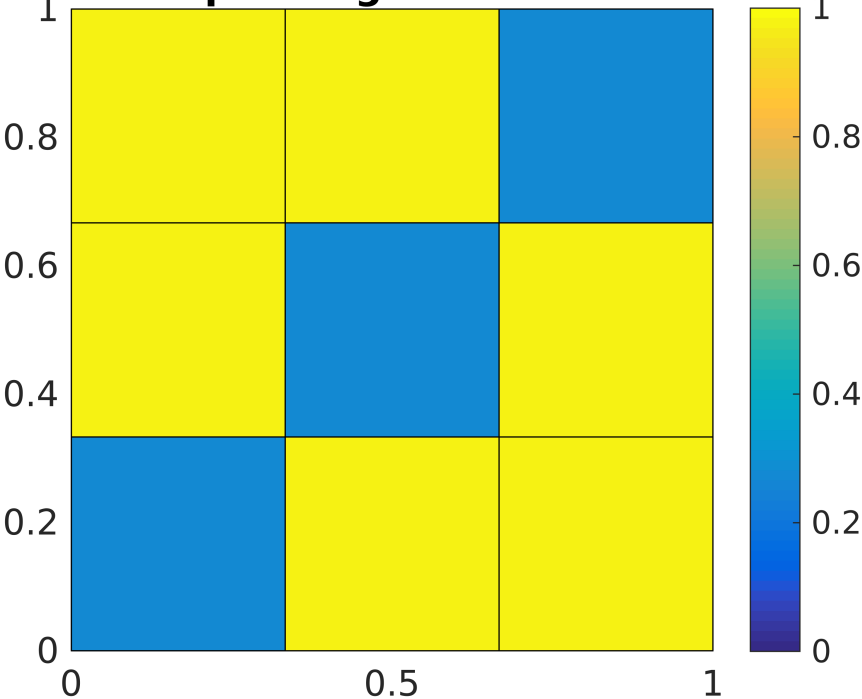}\\
\includegraphics[width=0.22\textwidth]{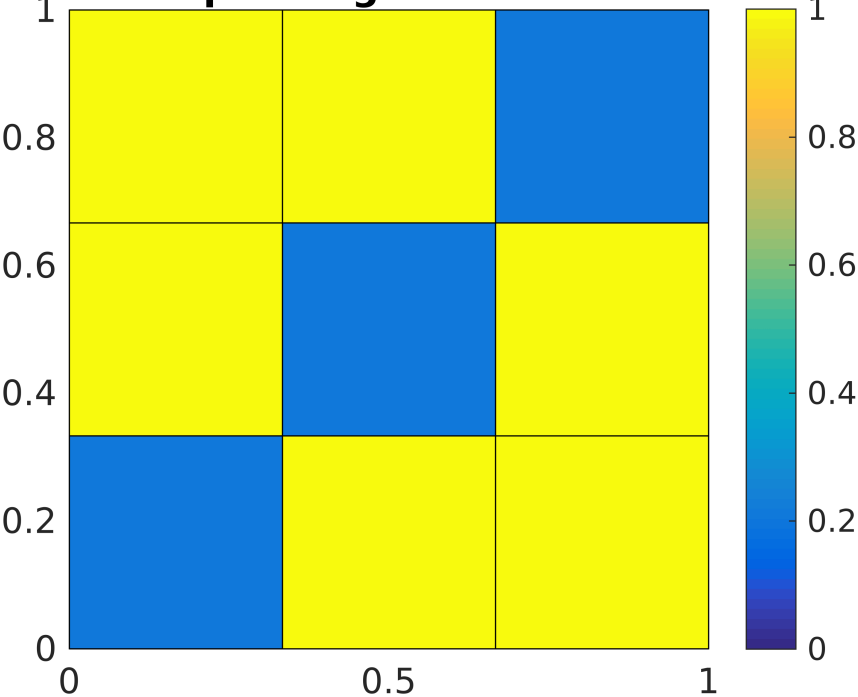}
\includegraphics[width=0.22\textwidth]{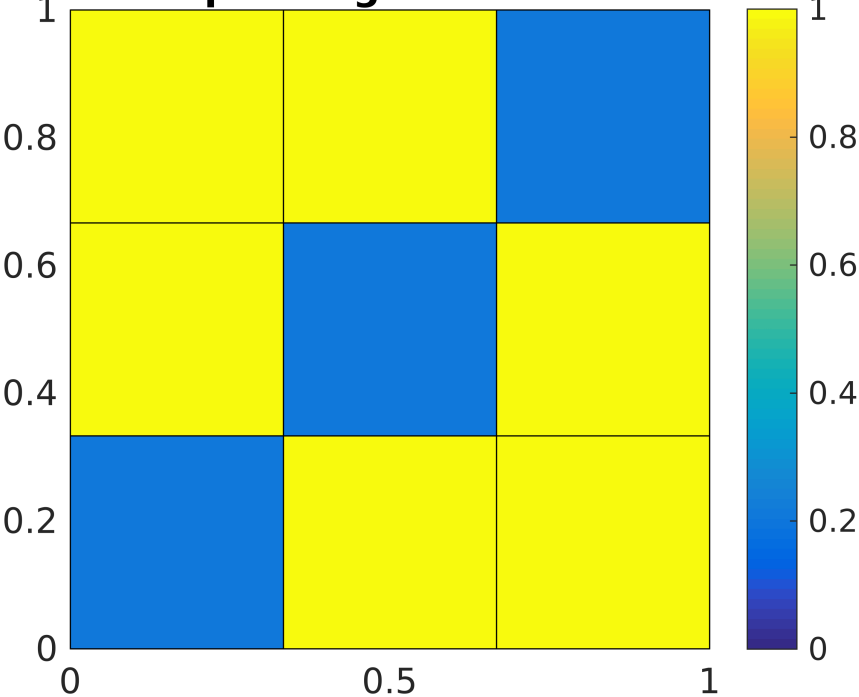}
\includegraphics[width=0.22\textwidth]{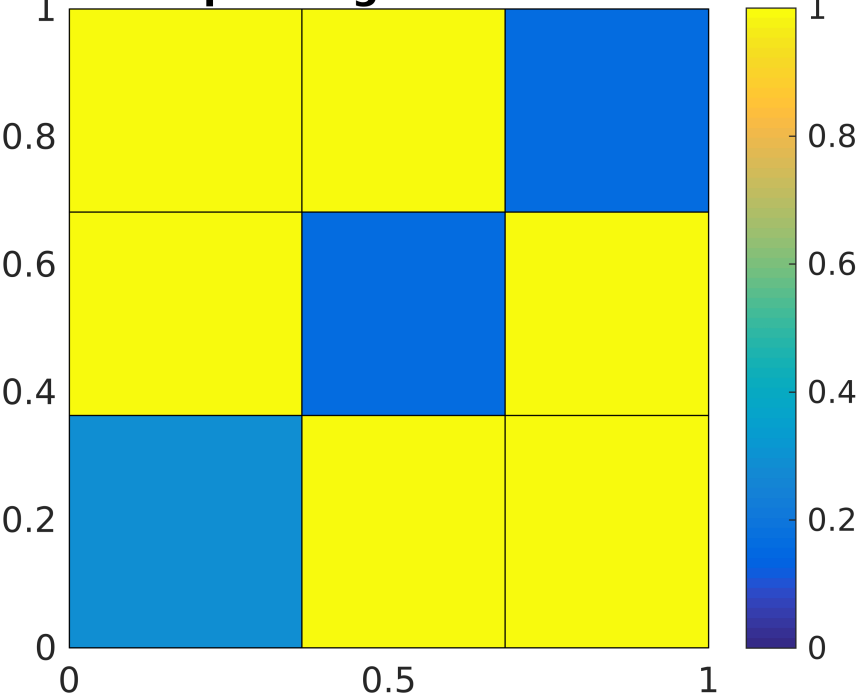}
\includegraphics[width=0.22\textwidth]{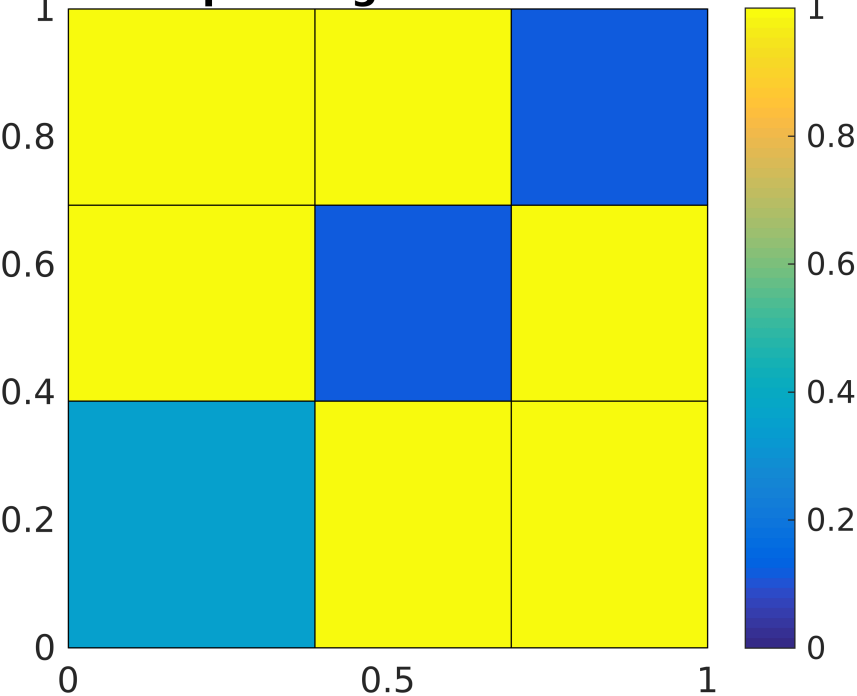}\\
\includegraphics[width=0.22\textwidth]{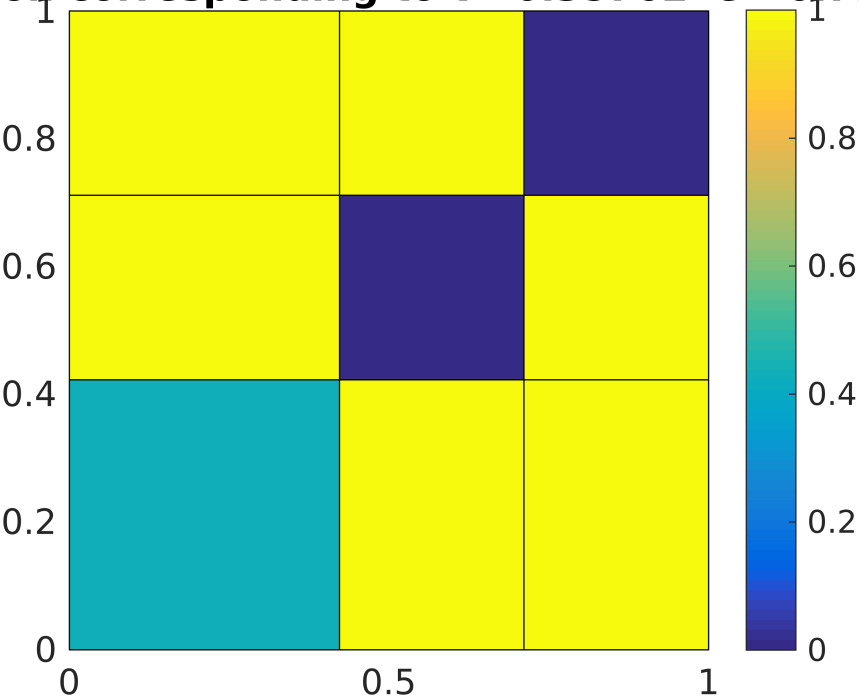}
\includegraphics[width=0.22\textwidth]{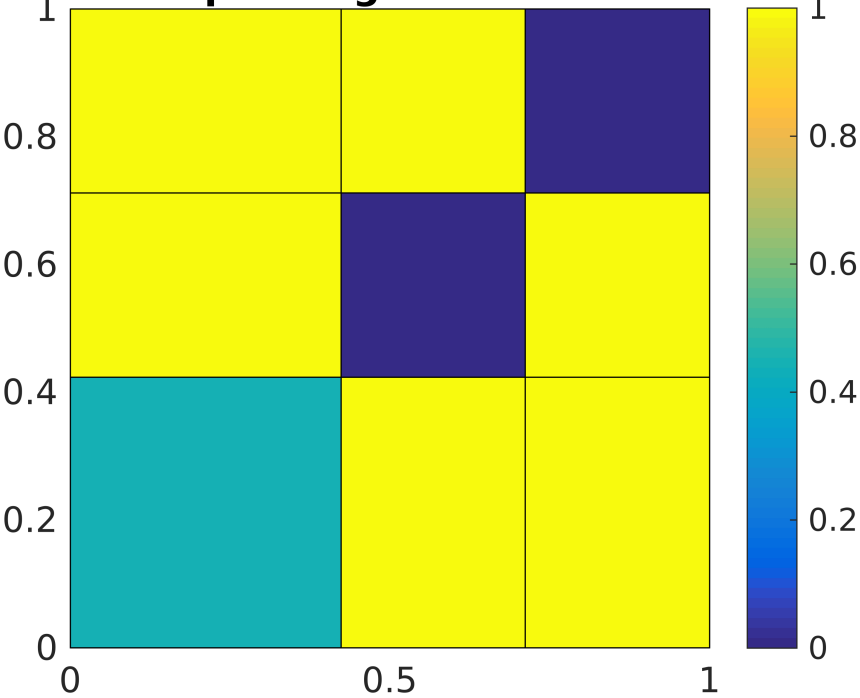}
\includegraphics[width=0.22\textwidth]{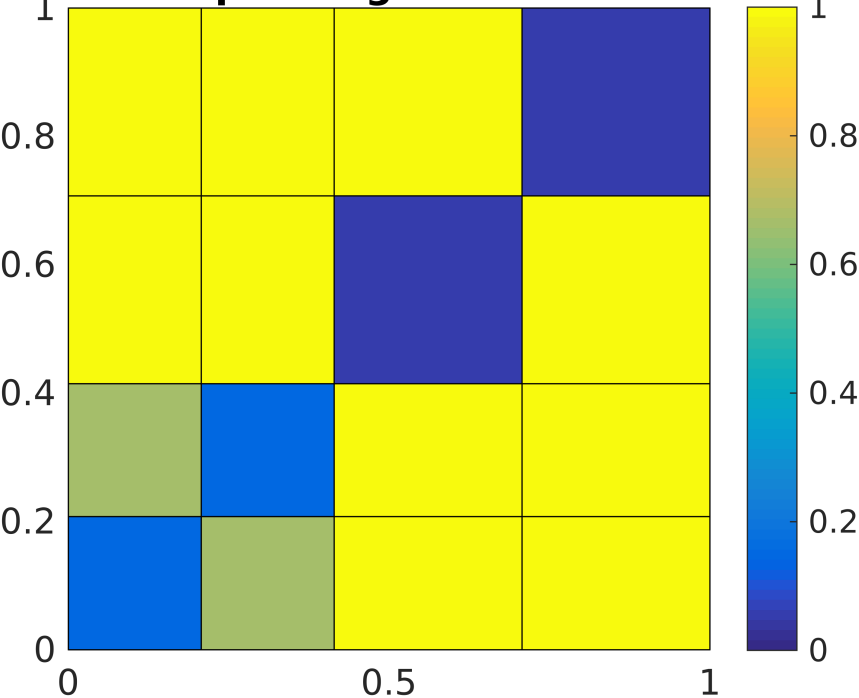}
\includegraphics[width=0.22\textwidth]{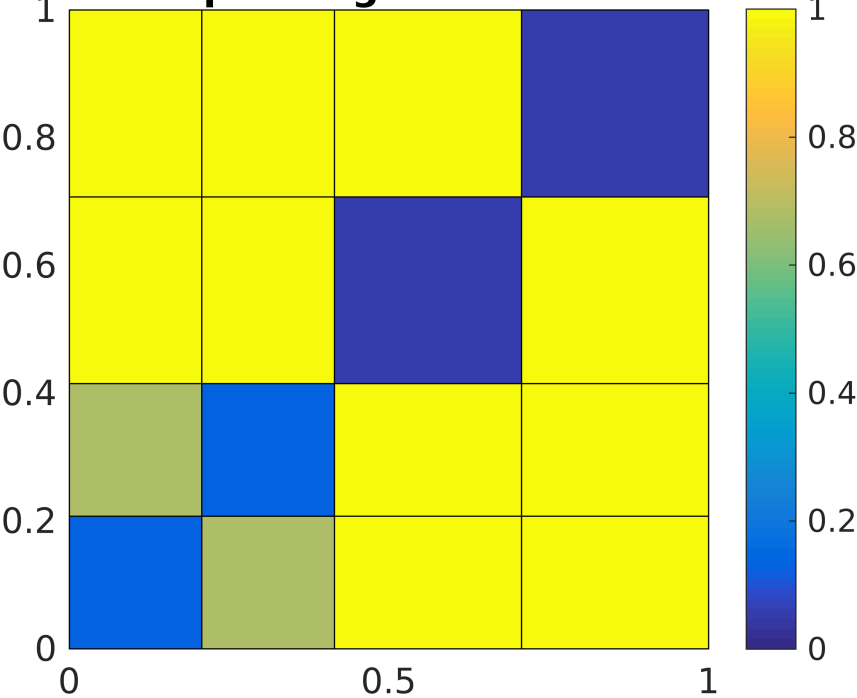}\\
\includegraphics[width=0.22\textwidth]{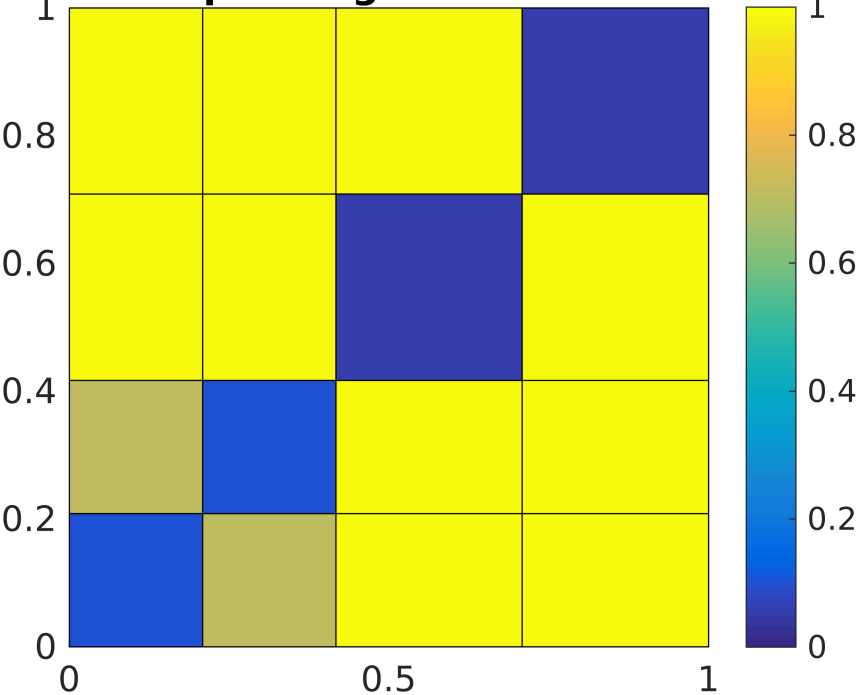}
\includegraphics[width=0.22\textwidth]{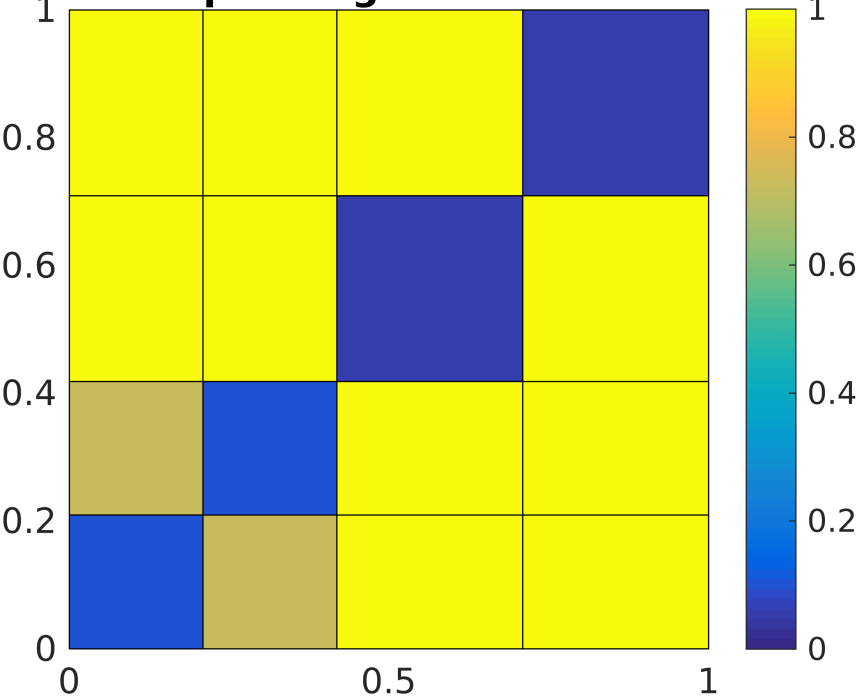}
\includegraphics[width=0.22\textwidth]{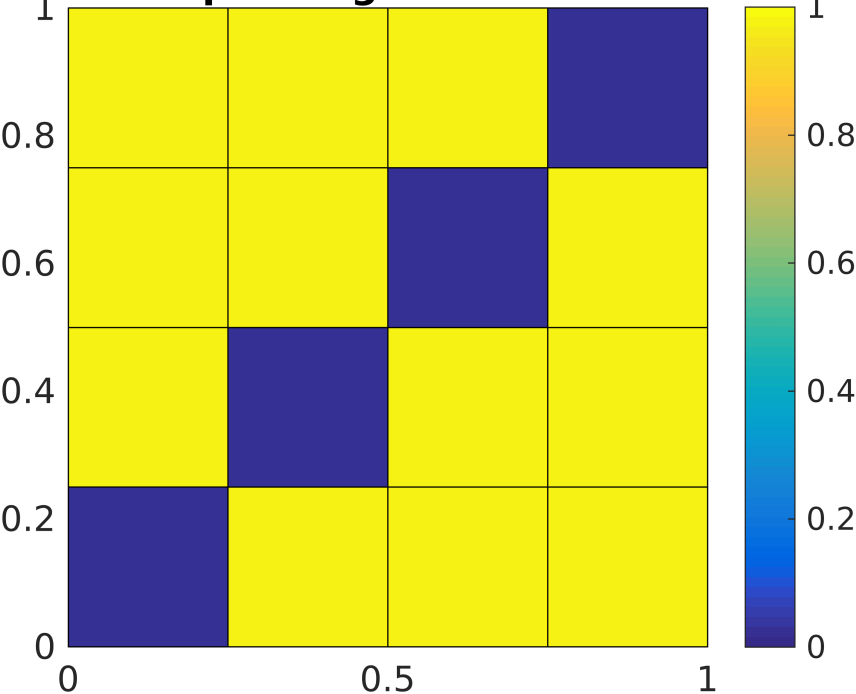}
\includegraphics[width=0.22\textwidth]{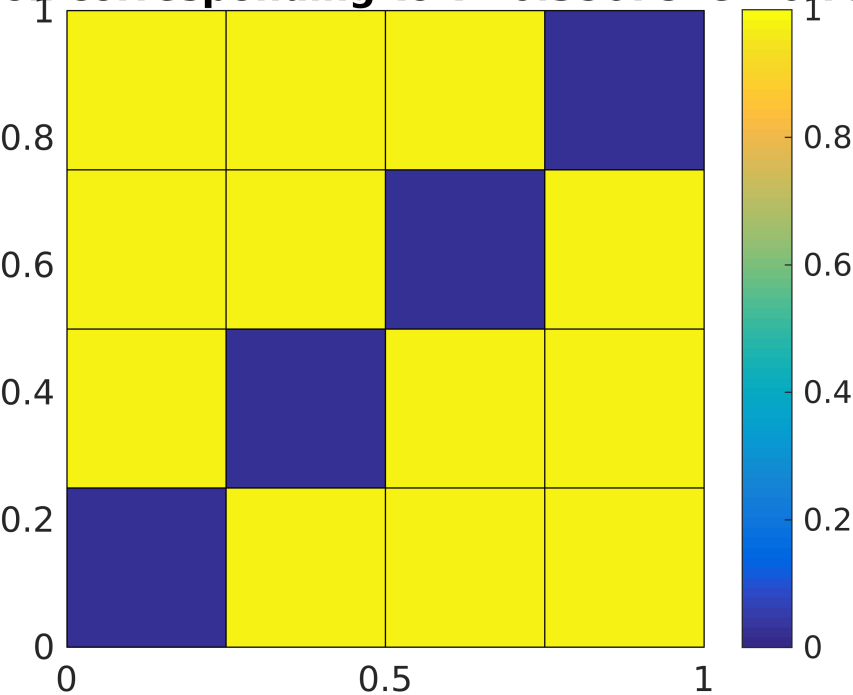}\\
\includegraphics[width=0.22\textwidth]{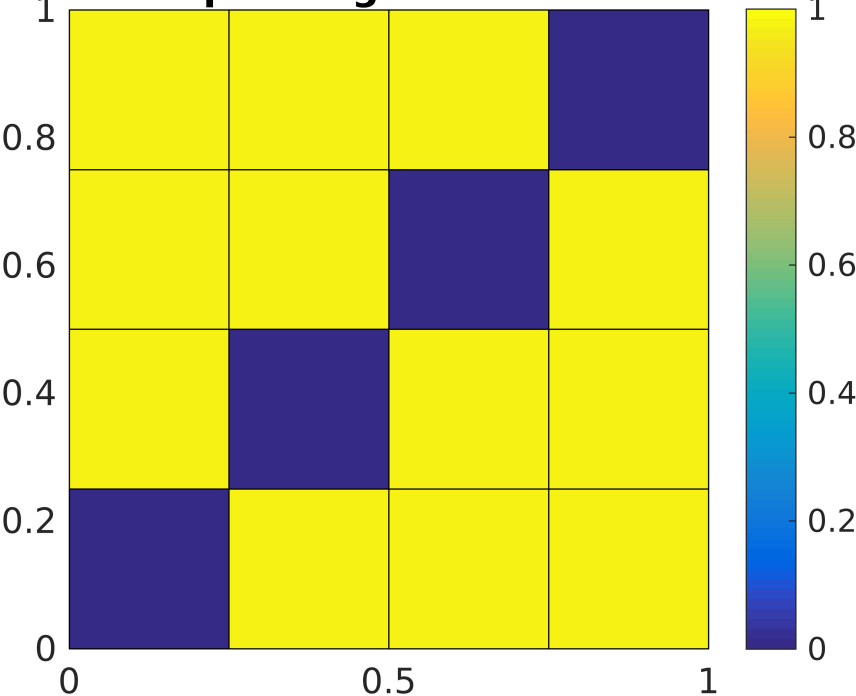}
\includegraphics[width=0.22\textwidth]{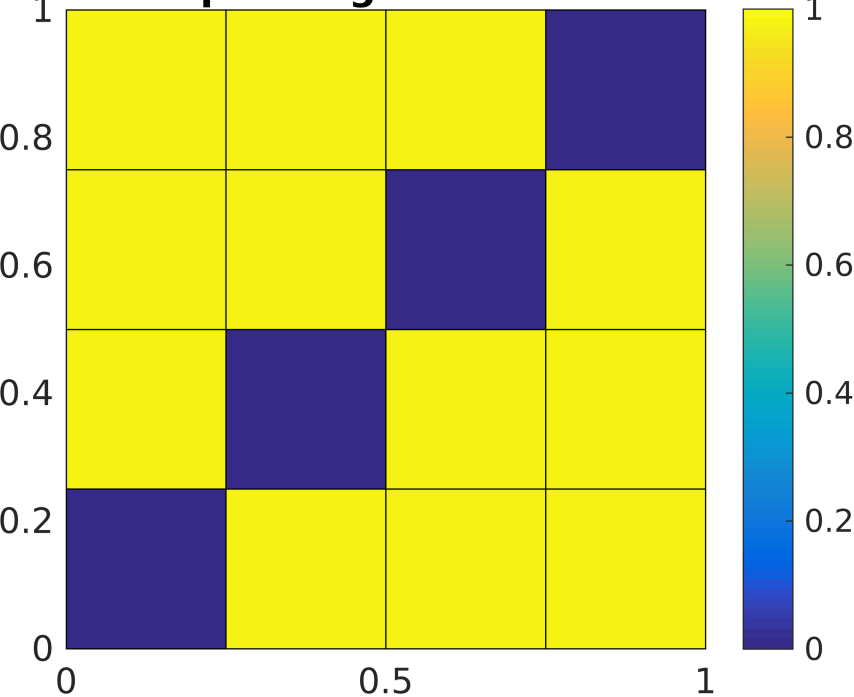}
\includegraphics[width=0.22\textwidth]{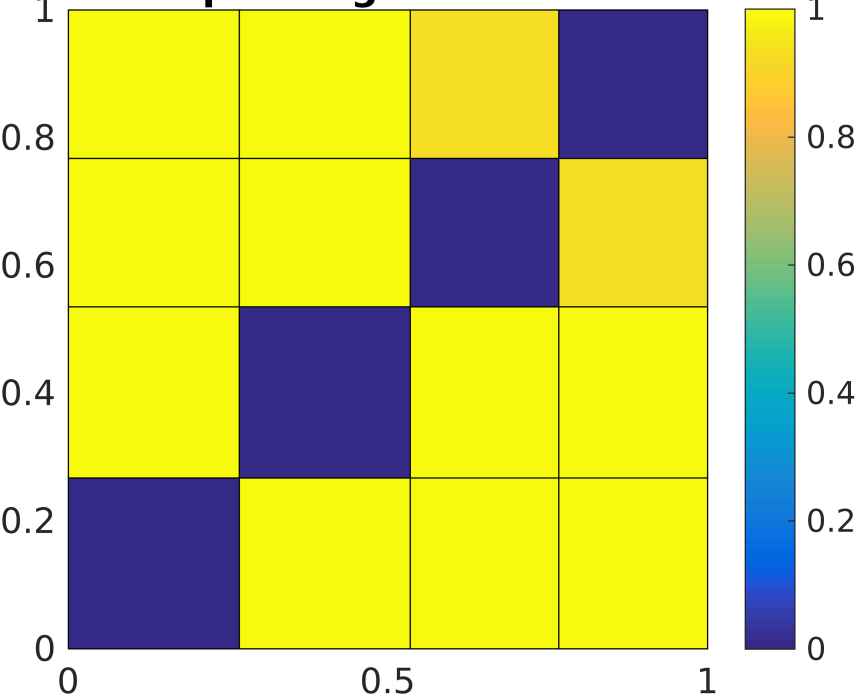}
\includegraphics[width=0.22\textwidth]{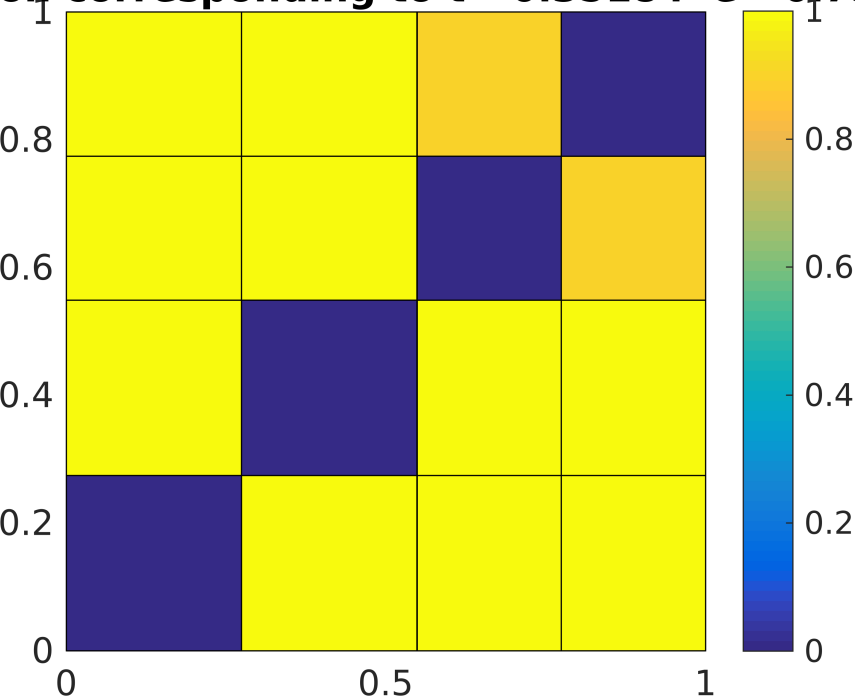}
\caption{The optimizing graphons at the phase boundaries, cut as $\T$
  decreases at fixed $\E=0.735$. The first two columns are before the
  transition and the last two columns are after the transition. Rows,
  top to bottom, show respectively the following transitions: (i)
  $B(1,1)\to B(2,1)$, (ii) $B(2,1)\to A(3,0)$, (iii) $A(3,0)\to
  B(2,1)$, (iv) $B(2,1)\to C(2,2)$, (v) $C(2,2)\to A(4,0)$, and (vi)
  $A(4,0)\to C(2,2)$. We discuss in the text the $\T$ values surrounding the
  transitions, corresponding to the middle two columns.}
\label{FIG:Line E0p735}
\end{figure}
\paragraph{Phases along line segment $(0.735, \tau)$.} 
In the second group of numerical simulations, we provide some
numerical evidence to support our conjecture on phases depicted
in Figure~\ref{phase-diagram}. Consider the optimizing graphons
in the phases which the vertical line segment $(0.735, \tau)$:
$\tau\in(0.3504, 0.3971)$, cuts through. (Figure~\ref{phase-diagram} is
quite crude: for an accurate representation of the
boundaries of $A(3,0)$ and $A(4,0)$ see Figure~\ref{FIG:N-0
  Boundary} .) From top to bottom, the line
cuts through: $B(1,1)$, $B(2,1)$, $A(3,0)$, $B(2,1)$, $C(2,2)$,
$A(4,0)$, and $C(2,2)$ phases. In Figure~\ref{FIG:Line E0p735}, we show the
optimizing graphons in phases before and after each transition along
the line. Let us mention here two obvious discontinuous
transitions, the first being the transition from $B(1,1)$ to
$B(2,1)$ at about $\tau=0.3737$, and the second being the transition
from $B(2,1)$ to $C(2,2)$ at about $\tau=0.3574$. 
{The derivative $\partial s/\partial \T$ of the entropy with respect to 
$\T$ exhibits jumps at these transitions}, as seen in
Figure~\ref{FIG:Line E0p735 Entropy}. A theoretical analysis of the transitions is
presented in the next section.

\paragraph{Phases along line segment $(0.845, \tau)$.} 
We performed similar simulations to reveal phases along the line
segment $(0.845, \tau)$, $\tau\in(0.5842, 0.6034)$. The phases, from
top to bottom, should be, respectively $B(1,1)$, $B(2,1)$, $B(3,1)$,
$B(4,1)$, $A(6,0)$, $B(4,1)$, $B(5,1)$, and
$C(5,2)$. Our simultation did not capture the last two phases, which lie
extremely close to the lower boundary of the phase space; 
representive optimizing graphons for the others
are shown in Figure~\ref{FIG:Line E0p845}. Also, due to limitations in 
computational power we were not able to resolve the transitions
between the phases as accurately as in the previous case, that is, those
in Figure~\ref{FIG:Line E0p735}.

\paragraph{Remark on our conjecture on the phase space structure.} We now briefly 
summarize the evidence behind our conjecture of the phase portrait in
Figure~\ref{phase-diagram}. First of all, by simulations and proofs in
several models, in particular this one, we had found that in the
interior of phase spaces optimizing graphons have always been found to
be multipodal. In this model furthermore, the only place we find more
than 3 podes is for $\eps>0.7$ and near the scalloped boundary.
(Recall that in all the numerical simulations in this paper we assume
that the graphons have $16$ or fewer podes, even though where we simulate
they always end up having many fewer than $16$ podes). Second, for each point
$(\eps, \tau)$ in the phase space except for $\eps > 0.75$ and close to
the scallops), our algorithm, even though computationally expensive, could
determine the optimal graphons up to relatively high accuracy. 
The computational cost is in general tolerable; in the exceptional
region it is too expensive to find enough graphons which satisfy all
the constraints to get the accuracy we wanted.  (In more detail, we use the techniques explained in~\cite{RRS1} with edge and triangle constraint intervals of size $10^{-9}$, which determines entropy to order $10^{-6}$, from which we determine our optimal graphons.)
Third, within each phase away from the scallops our computational power allows us to
perform simulations on relatively fine meshes of $(\eps, \tau)$. This
is how we determined the boundary of the $A(3,0)$ and $A(4,0)$ phases
as well as the transitions shown in Figure~\ref{FIG:Line E0p735
  Entropy}, for instance. In more detail, consider the middle two graphons in each row in
Figure~\ref{FIG:Line E0p735}, which straddle the transitions. They all
have edge density $\E=0.735$. In the first row they have triangle
densities $0.37373880$ and $0.37368265$; in the second row the triangle densities
are $0.37171725$ and $0.37154879$; in the third row they are $0.36138489$
and $0.36132874$; in the fourth row they are $0.35745410$ and
$0.35739795$; in the fifth they are $0.35706102$ and $0.35694871$; and
in the last row they are $0.35475870$ and $0.35318639$.

\begin{figure}[!htb]
\centering
\includegraphics[width=0.5\textwidth]{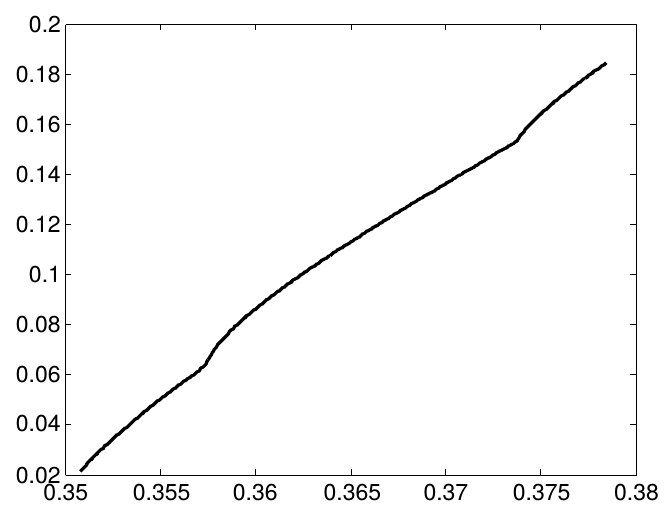}
\includegraphics[width=0.2\textwidth,height=0.38\textwidth]{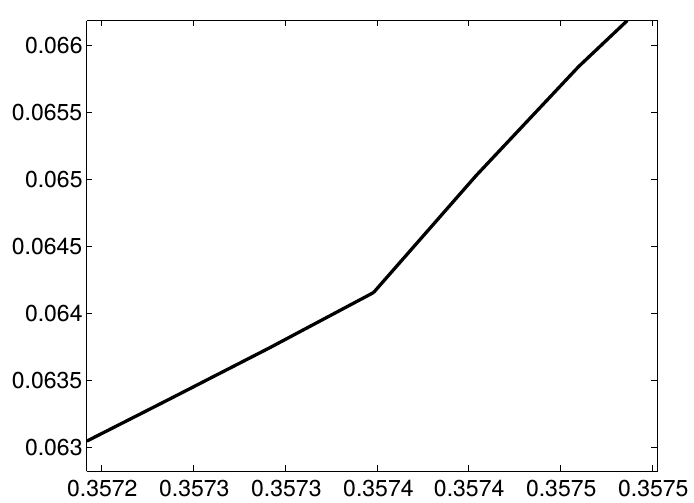}
\includegraphics[width=0.2\textwidth,height=0.38\textwidth]{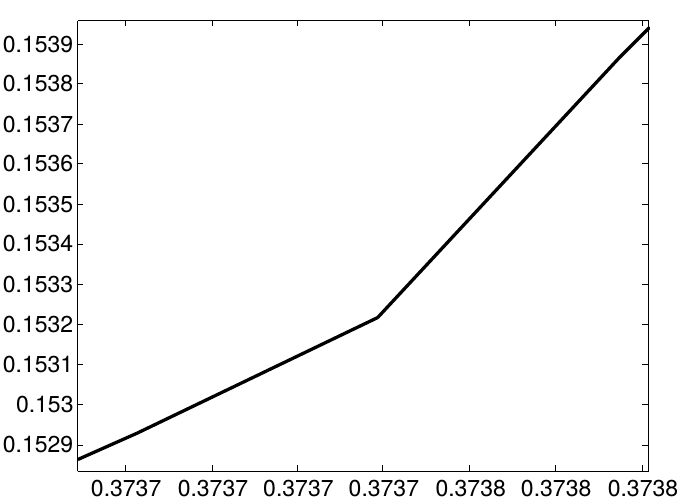}
\caption{Left: entropy $s$ as a function of $\tau$ along the line segment $(0.735, \tau)$, $\tau\in(0.3504, 0.3971)$; Middle: zoom  of $s(\tau)$ around the $B(2,1)\to C(2,2)$ phase transition at $\tau=0.3574$; Right: zoom  of $s(\tau)$  around the $B(1,1)\to B(2,1)$ phase transition at  $\tau=0.3737$.}
\label{FIG:Line E0p735 Entropy}
\end{figure}

\begin{figure}[!htb]
\centering
\includegraphics[width=0.3\textwidth]{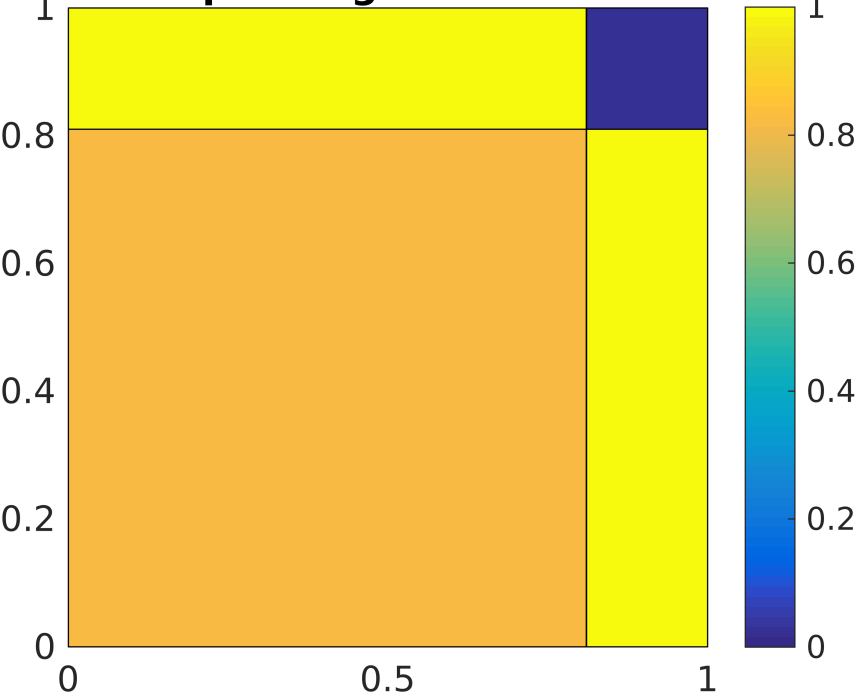}
\includegraphics[width=0.3\textwidth]{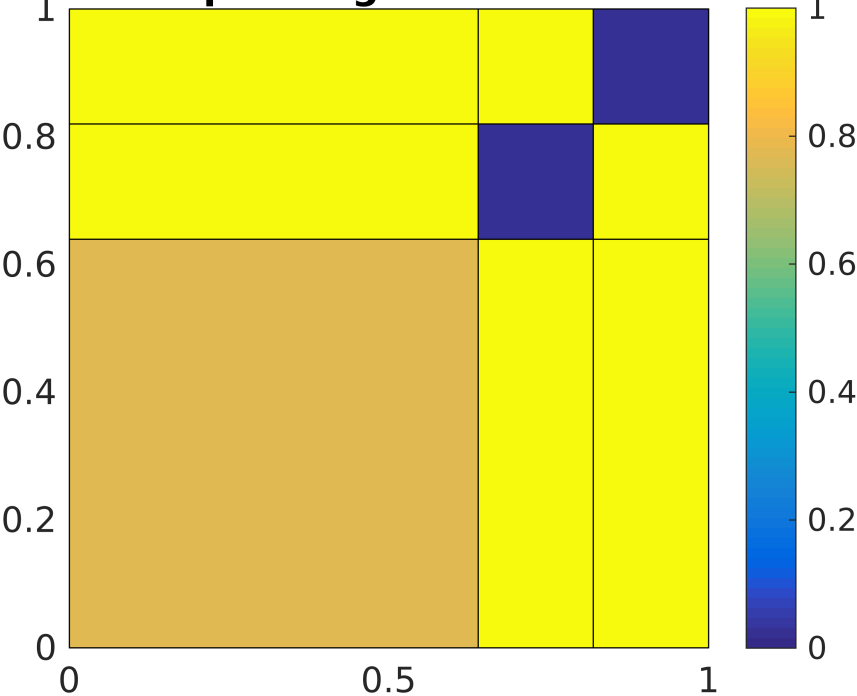}
\includegraphics[width=0.3\textwidth]{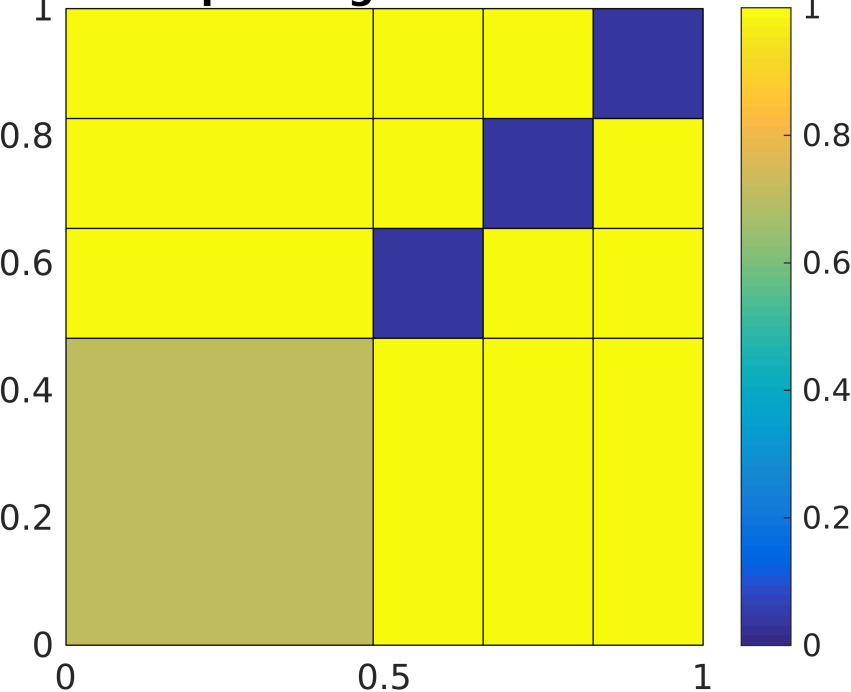}\\
\includegraphics[width=0.3\textwidth]{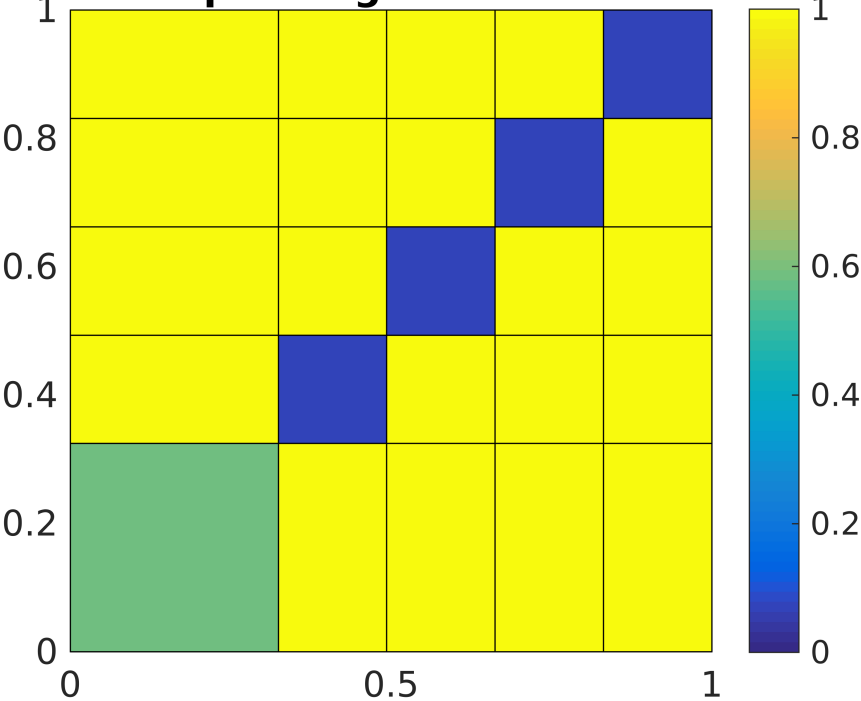}
\includegraphics[width=0.3\textwidth]{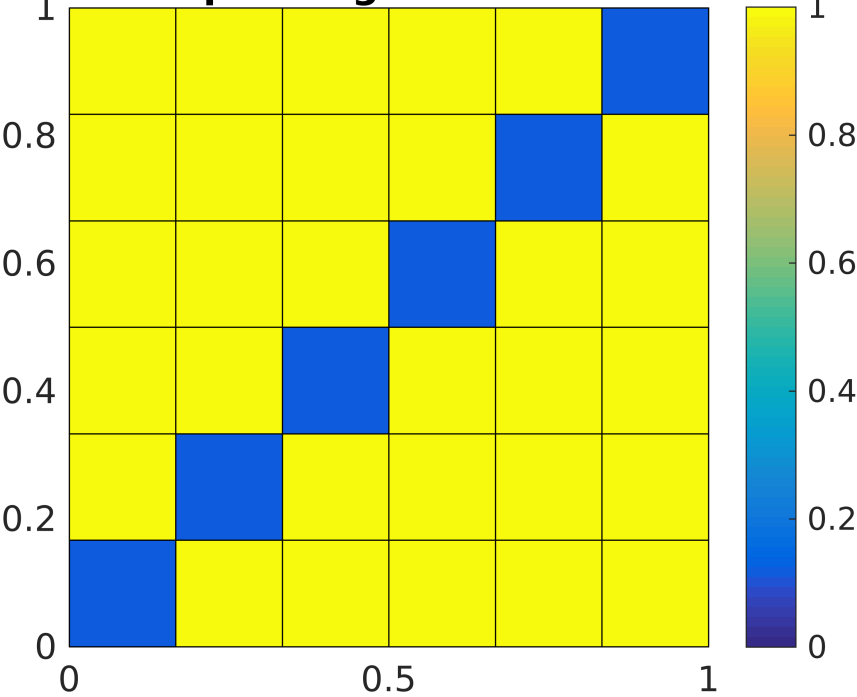}
\includegraphics[width=0.3\textwidth]{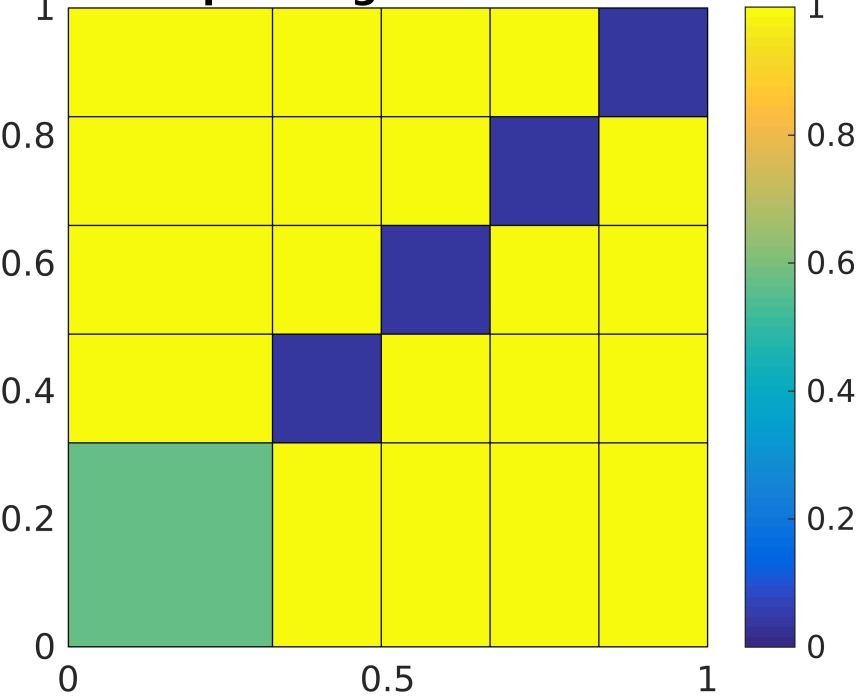}
\caption{Typical optimizing graphons in the phases that the line
  segment $(0.845, \tau)$, $\tau\in(0.5842, 0.6034)$ cuts
  through. From top left to bottom right are respectively graphons in
  phases $B(1,1)$, $B(2,1)$, $B(3,1)$, $B(4,1)$, $A(6,0)$
  and $B(4,1)$, respectively.}
\label{FIG:Line E0p845}
\end{figure}

\section{Analysis of transitions}
\label{SEC:transitions}

As noted above, the numerical simulations indicate that all phase
transitions below the Erd\H{o}s-R\'{e}nyi curve with the exception of
$A(n,0) \leftrightarrow B(n-1,1)$, occur discontinuously.  At each
other transition, the optimizing graphon jumps from one form to
another, and so the densities of certain subgraphs also jump. In this
section, we prove that certain of these transitions \emph{can only
  occur discontinuously}.

\begin{theorem}\label{thm:no-continuous-theorem-1}
 Except at a finite number of values of the edge density $\E$, 
there cannot be a 
continuous transition from a $B(1,1)$ bipodal phase to a $B(2,1)$ or $C(1,2)$ 
tripodal phase. 
\end{theorem}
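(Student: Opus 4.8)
The plan is to work with the Euler--Lagrange (self-consistency) equations satisfied by any entropy optimizer and to show that the symmetry-breaking mode needed to grow a tripodal optimizer continuously out of a bipodal one is never a zero mode. Writing $\psi(p)=-\tfrac12\ln\frac{p}{1-p}$ and $(g\circ g)(x,y)=\int_0^1 g(x,z)g(z,y)\,dz$, an optimizer for given $(\E,\T)$ satisfies, with Lagrange multipliers $\alpha,\beta$,
\[
\psi(g(x,y))=\alpha+3\beta\,(g\circ g)(x,y).
\]
Both a $B(2,1)$ and a $C(1,2)$ graphon are tripodal with a $\bbZ_2$ symmetry exchanging a pair of equal-size podes; writing the value matrix and size matrix as
\[
P=\begin{pmatrix} a & b & f\\ b & a & f\\ f & f & h\end{pmatrix},\qquad \Lambda=\diag(\lambda,\lambda,\mu),
\]
the self-consistency equation reduces to $\psi(P_{ij})=\alpha+3\beta\,(P\Lambda P)_{ij}$, and the bipodal $B(1,1)$ limit is exactly the locus $a=b$, where the paired podes merge into one pode of size $2\lambda$.

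Next I would extract the key relation by subtracting the $(1,1)$ and $(1,2)$ equations. Since $(P\Lambda P)_{11}-(P\Lambda P)_{12}=\lambda(a^2+b^2)-2\lambda ab=\lambda(a-b)^2$, this gives
\[
\psi(a)-\psi(b)=3\beta\lambda\,(a-b)^2 ,
\]
an identity forced on every tripodal optimizer of either type.

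The contradiction then comes from the continuity hypothesis. A continuous transition means the tripodal optimizer converges in the cut metric to the bipodal optimizer as $(\E,\T)$ approaches the transition curve, i.e.\ $a,b\to p^{*}$ with common merged value $p^{*}\in(0,1)$ and $a\neq b$ on the tripodal side. Dividing the key relation by $a-b$ and letting $a,b\to p^{*}$ yields $\psi'(p^{*})=\lim 3\beta\lambda(a-b)=0$; but $\psi'(p^{*})=-\tfrac{1}{2p^{*}(1-p^{*})}\le -2<0$ for every $p^{*}\in(0,1)$, a contradiction, as long as $\lambda>0$ and $\beta$ stays bounded along the approach. Hence no continuous transition is possible at any $\E$ whose transition-curve bipodal optimizer is nondegenerate. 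As a cross-check one can verify that the unique symmetry-breaking perturbation $v$ satisfies $g\circ v=0$, so the linearized self-consistency operator sends it to $\psi'(g)v\neq0$; by the implicit function theorem no tripodal branch bifurcates, the infinitesimal version of the same fact.

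Finally I would account for the finitely many exceptional $\E$. The argument requires only $p^{*}\in(0,1)$, $\lambda>0$, and bounded $\beta$, and these can fail where the bipodal optimizer develops a pode value in $\{0,1\}$, a vanishing pode, or a diverging multiplier. Along the (real-)analytic transition curve the optimizer data depend analytically on $\E$, so each such degeneracy is an isolated zero of a nonconstant analytic function and occurs at only finitely many $\E$ on the compact edge-density interval. The main obstacle is precisely this structural/limiting step rather than the algebra: one must justify, from the definitions of the phases and compactness of reduced graphons, that a continuous $B(1,1)\leftrightarrow B(2,1)$ or $B(1,1)\leftrightarrow C(1,2)$ transition can degenerate only by merging the symmetric pair (ruling out, for instance, a pode shrinking to measure zero), and then that the nondegeneracy conditions hold off a finite set of $\E$.
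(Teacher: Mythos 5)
Your key identity $\psi(a)-\psi(b)=3\beta\lambda\,(a-b)^2$ is exactly the paper's starting point, but the conclusion you draw from it is inverted, and this is a genuine gap. Dividing by $a-b$ shows that along any continuous merging $a,b\to p^{*}\in(0,1)$ one has $3\beta\lambda(a-b)\to\psi'(p^{*})\neq 0$, i.e.\ $\beta$ is \emph{forced} to diverge like $|\psi'(p^{*})|/(3\lambda|a-b|)$. So the hypothesis under which your contradiction operates --- ``$\beta$ stays bounded along the approach'' --- can never hold at any candidate transition point; it is not a nondegeneracy condition failing at finitely many $\E$, it fails identically. The closing analyticity argument cannot rescue this (besides presupposing the existence and analytic dependence of the very transition curve you are trying to rule out), and your implicit-function-theorem cross-check has the same defect: the linearization $\psi'(g)v=3\beta(g\circ v+v\circ g)$ is only decisive when $\beta$ is finite. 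What remains to be excluded is precisely the divergent-multiplier scenario, and your proposal says nothing about it.

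That scenario is where the paper's proof does its real work. Its step 2 shows that divergent $(\alpha,\beta)$ force the limiting $B(1,1)$ graphon to satisfy $\nabla\T=\lambda\nabla\E$ in the podal parameters (with one-sided inequalities for parameters equal to $0$ or $1$), i.e.\ to be a stationary point of $\T$ at fixed $\E$; for bipodal graphons this happens only at Erd\H{o}s--R\'enyi graphons or at the minimizer of $\T$, which has $d=1$, $b=0$ and $c$ solving an algebraic equation. Its step 3 then shows that this min-$\T$ bipodal graphon is not an entropy maximizer: perturbing $c$ (adjusting $a$ to hold $\E$ fixed) gains entropy at order $\delta_c$ while changing $\T$ only at second order, and splitting the first pode with $a_{\pm}=a\pm\delta_a$, $\delta_a$ of order $\delta_c^{2/3}$, restores $\T$ at an entropy cost $O(\delta_c^{4/3})$; hence a $B(2,1)$ graphon with the same $(\E,\T)$ strictly beats it, so the purported transition point lies inside a tripodal phase --- a contradiction. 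The ``finitely many exceptional $\E$'' of the theorem arises from this step (an analytic function of $\E$, the discrepancy between the min-$\T$ and max-entropy equations, must be nonzero), not from the degeneracies you list. To complete your proof you would need to supply analogues of these two steps.
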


Theorem~\ref{thm:no-continuous-theorem-1} can be generalized to consider all $B \leftrightarrow C$ transitions:

\begin{theorem}\label{thm:no-continuous-theorem-2}
Except at a finite number of values of the edge density $\E$, there cannot be a continous transition from
a $B(m,1)$ to a $C(m,2)$ phase. 
\end{theorem}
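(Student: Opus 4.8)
\noindent The plan is to exploit the Euler--Lagrange characterization of the entropy optimizer, in the same spirit as the proof of Theorem~\ref{thm:no-continuous-theorem-1}. Both phases are multipodal with the same block of $m$ statistically equivalent podes; the only structural difference is that the single distinguished pode of $B(m,1)$ (size $d$, internal probability $f$) is replaced in $C(m,2)$ by an $S_2$-symmetric pair of podes, each of size $e$, with internal probability $g'=p_{m+1,m+1}=p_{m+2,m+2}$ and cross probability $h'=p_{m+1,m+2}$. By the defining $S_m\times S_2$ symmetry of $C(m,2)$, both pair podes interact with every equivalent pode through a common probability $r'$. A continuous transition means the $C(m,2)$ optimizer converges to the $B(m,1)$ optimizer in the cut metric as $(\E,\T)$ crosses the shared boundary; since the limit must be a genuine $B(m,1)$ graphon with a \emph{uniform} distinguished pode, the pair has to merge, forcing $e\to d/2$ and, crucially, the order parameter $w:=g'-h'\to 0$. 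I would take $w\to 0$ as the defining feature of continuity and derive a contradiction.

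First I would write the stationarity condition following from Theorem~\ref{thm:g-variation}: any optimizer with all values in $(0,1)$ satisfies $\ln\frac{g(x,y)}{1-g(x,y)}=a+b\,T(x,y)$, where $T(x,y)=\int_0^1 g(x,z)g(z,y)\,dz$ and $a,b$ are the Lagrange multipliers for the edge and triangle constraints. Evaluating this on the two pair-blocks and using the symmetry, a direct computation of $T_{m+1,m+1}=mc(r')^2+e(g')^2+e(h')^2$ and $T_{m+1,m+2}=mc(r')^2+2eg'h'$ gives the exact cancellation $T_{m+1,m+1}-T_{m+1,m+2}=e\,(g'-h')^2=e\,w^2$. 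Subtracting the two block equations (the multiplier $a$ drops out) then yields the key identity
\[
\Psi(g')-\Psi(h')=b\,e\,w^2,\qquad \Psi(u):=\ln\tfrac{u}{1-u}.
\]
Note that the left side is odd and the right side even under the swap $g'\leftrightarrow h'$, so the obstruction to a symmetric solution is forced to be quadratically small in $w$.

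The contradiction comes from the fact that $\Psi$ has derivative bounded below: $\Psi'(u)=\frac{1}{u(1-u)}\ge 4$ for $u\in(0,1)$. Dividing the identity by $w=g'-h'\ne 0$ and applying the mean value theorem, $\Psi'(\xi)=b\,e\,w$ for some $\xi$ between $h'$ and $g'$; as $w\to 0$ the left side stays $\ge 4$ while the right side tends to $0$, which is impossible. Equivalently, since $\partial_w\!\left[\Psi(g')-\Psi(h')\right]\big|_{w=0}=\Psi'(f)\ne 0$, the implicit function theorem shows that near the symmetric configuration the only solution of the difference equation is $w=0$: no asymmetric $C(m,2)$ branch can bifurcate continuously off the $B(m,1)$ branch, so the transition must be discontinuous.

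Two hypotheses drive this argument: that the pair-block probabilities lie in the open interval $(0,1)$, so that $\Psi$ is smooth and the Euler--Lagrange relations are genuine equalities rather than one-sided inequalities, and that the multiplier $b$ stays finite (with $e>0$). Along the shared $B(m,1)/C(m,2)$ boundary the optimizer data $f(\E)$ and $b(\E)$ solve an analytic system of Euler--Lagrange and constraint equations, hence are real-analytic in $\E$ on the bounded interval where the two phases are adjacent; the degenerate conditions $f(\E)\in\{0,1\}$ or $b(\E)=\infty$ can therefore hold only at isolated points, giving the finite exceptional set in the statement. I expect the main obstacle to be exactly this regularity bookkeeping rather than the algebra: one must justify that near the boundary both optimizers genuinely have the asserted podal structure and that cut-metric continuity really forces convergence of the podal parameters, and then verify the analyticity and non-degeneracy of $f(\E),b(\E)$. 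The clean identity and the bound $\Psi'\ge 4$ are otherwise elementary, and it is precisely this degeneracy analysis---the only place the edge density enters---that produces the ``finite number of values of $\E$.''
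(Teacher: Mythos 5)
Your key identity is exactly the computation the paper makes in its own step~1 (carried out explicitly in the proof of Theorem~\ref{thm:no-continuous-theorem-1}, where it reads $S_0'(a_-)-S_0'(a_+)=\tfrac{3\beta c}{2}(a_+-a_-)^2$), but your next move is where the argument breaks. From $\Psi'(\xi)=b\,e\,w$ with $\Psi'\ge 4$ and $w\to 0$, the correct conclusion is not a contradiction but that the multiplier $b$ (the paper's $\beta$) \emph{must diverge} to $+\infty$ along any sequence of $C(m,2)$ optimizers approaching a $B(m,1)$ limit. The Lagrange multipliers are not fixed constants: they are functions of $(\E,\T)$, and nothing prevents them from blowing up as $(\E,\T)$ approaches the hypothetical transition point. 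Your attempt to dispose of this case --- declaring $b(\E)=\infty$ a ``degenerate condition'' that analyticity confines to isolated points --- is unfounded, and in fact backwards: at \emph{every} point of a hypothetical continuous $B(m,1)\leftrightarrow C(m,2)$ transition the divergence is forced by your own identity, so it is not an isolated degeneracy to be excluded but the generic scenario the proof must confront. The same objection defeats your implicit-function-theorem variant, which again treats $b$ as a finite parameter varying continuously along the branch.

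This is precisely why the paper's proof has two further steps that your proposal is missing. Step~2: in the divergent-multiplier limit, the Euler--Lagrange system degenerates to $\nabla \T=\lambda\nabla\E$ (with $\lambda=-\beta/\alpha$, and one-sided inequalities for parameters sitting at $0$ or $1$), i.e.\ the limiting $B(m,1)$ graphon must be a stationary point of $\T$ at fixed $\E$, with entropy playing no role. Step~3: such a stationary graphon cannot be the entropy maximizer --- one perturbs $c$ (gaining entropy at first order in $\delta_c$ while changing $\T$ only at second order) and then splits the distinguished pode into a symmetric pair with $a_+-a_-$ of order $\delta_c^{2/3}$, restoring $\T$ at an entropy cost of only $O(\delta_c^{4/3})$, yielding a $C(m,2)$ graphon with the same $(\E,\T)$ and strictly larger entropy, contradicting the assumption that we were at the phase boundary. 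Moreover it is step~3, not regularity of $b(\E)$, that produces the finite exceptional set: the stationarity conditions for $\T$ and for the entropy are distinct analytic equations in $\E$, and they can have common roots at only finitely many values of $\E$. So your identity and mean-value bound are correct as far as they go, but they establish only the forced divergence of the multipliers; without the stationarity characterization and the perturbation construction, the theorem is not proved.
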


Assuming that our conjectured phase portrait is correct, this leaves the
$B(m,1) \leftrightarrow B(m-1,1)$, $C(n-2,2) \leftrightarrow A(n,0)$, 
$A(n,0) \leftrightarrow B(n-2,1)$ and
$A(n,0) \leftrightarrow B(n-1,1)$ transitions for us to consider. 

\begin{theorem}\label{thm:3}
For $m>2$, there cannot be a continuous transition from a $B(m,1)$
phase to a $B(m-1,1)$ phase, and there cannot be a continuous transition
from an $A(n,0)$ phase to a $B(n-2,1)$ phase.  
\end{theorem}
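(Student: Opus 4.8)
The plan is to read off, from the variational principle (Theorem~\ref{thm:g-variation}), the Euler--Lagrange equations that every entropy-maximizing multipodal graphon must satisfy, and to exploit the symmetry built into each phase. Writing a candidate optimizer as a block model with podes of measures $c_1,\dots,c_k$ and block values $g_{ij}$, and introducing multipliers for the edge and triangle constraints, stationarity yields
\be
\phi(g_{ij}) = a + b\,M_{ij}, \qquad M_{ij} := \sum_{\ell} c_\ell\, g_{i\ell}g_{\ell j},
\label{ELplan}
\ee
where $\phi(x)=\ln\frac{x}{1-x}$ and $a,b$ are constants fixed by the two multipliers (constant factors absorbed). Since $\phi$ is strictly increasing, two blocks carry the same value exactly when their $M$-values agree; this is the structural lever for the whole argument. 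First I would record the dichotomy that separates the continuous transition $A(n,0)\leftrightarrow B(n-1,1)$ from those in the theorem: $B(m,1)$ and $A(n,0)$ have exactly one \emph{more} pode than $B(m-1,1)$ and $B(n-2,1)$, whereas $A(n,0)$ and $B(n-1,1)$ have the \emph{same} number. A continuous transition forces a single common optimizer $g_0$ on the boundary that is a limit of optimizers from both sides; when the pode counts agree, $g_0$ is reached by letting one pode differentiate (its measure and block values moving freely), and no obstruction arises, while when the counts differ by one, $g_0$ must be reached by coalescing two podes, which is where the contradiction lives.

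The heart of the matter is a ``perfect square'' consequence of \eqref{ELplan}. If two podes $i,j$ are interchanged by the symmetry of the phase, they share a common measure $c$ and satisfy $g_{ii}=g_{jj}$ and $g_{i\ell}=g_{j\ell}$ for $\ell\ne i,j$; subtracting the stationarity equations for the blocks $(i,i)$ and $(i,j)$, the off-diagonal terms cancel and one finds $M_{ii}-M_{ij}=c\,(g_{ii}-g_{ij})^2$, hence
\be
\phi(g_{ii})-\phi(g_{ij}) = b\,c\,(g_{ii}-g_{ij})^2 .
\label{squareplan}
\ee
Setting $u=g_{ii}-g_{ij}$ and dividing by $u$, the left side is the mean slope of $\phi$ over $[\,g_{ij},g_{ii}\,]$, which is bounded below by $\min_x \phi'(x)>0$, while the right side is $b\,c\,u$. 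A nonzero $u$ therefore forces $|u|\ge \mathrm{const}/(|b|\,c)$, so $u$ cannot tend to $0$ while $b,c$ stay bounded: two equal-measure interchangeable podes cannot coalesce continuously, and the symmetry-breaking parameter $u$ must jump. Since \eqref{squareplan} holds at \emph{every} optimizer of the phase, $u$ is bounded away from $0$ throughout a neighborhood of the boundary, contradicting continuity.

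For $A(n,0)\to B(n-2,1)$ I would apply this directly: from the $A(n,0)$ side the optimizer is $S_n$-symmetric, so its only degenerate limit is $g_{\mathrm{diag}}=g_{\mathrm{off}}$, i.e.\ the single-block (Erd\H{o}s--R\'enyi) graphon; and reaching a $B(n-2,1)$ graphon from the $B(n-2,1)$ side while matching an $S_n$-symmetric limit again forces all block values to coincide. Thus the common $g_0$ must be the single-block graphon, approached from $A(n,0)$ by an equal-measure coalescence of two of the $n$ podes (measure $1/n$), which \eqref{squareplan} forbids. For $B(m,1)\to B(m-1,1)$ with $m>2$ I would argue that no admissible degeneration lands in $B(m-1,1)$: coalescing two of the $m$ equal bulk podes requires bulk-internal $=$ bulk-bulk, which by $S_m$ merges \emph{all} bulk podes into $B(1,1)$, not $B(m-1,1)$; merging the special pode with a single bulk pode forces, by block-homogeneity, all bulk/special values to coincide (full collapse); and sending the special measure to zero lands in $A(m,0)$. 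Hence the common limit $g_0$ is either reached by an equal-measure coalescence (excluded by \eqref{squareplan}) or is a strictly more symmetric graphon ($A(m,0)$ or single-block), which can meet the $B(m,1)\,|\,B(m-1,1)$ boundary only at isolated, higher-symmetry configurations. Finally I would show that those exceptional configurations, together with the loci where the bound in \eqref{squareplan} degenerates (multiplier $b\to\infty$, or a coalescing measure $c\to0$), are cut out by a nonzero analytic relation and so occur at only finitely many $\E$, feeding the constraints $t_{K_2}=\E$, $t_{K_3}=\T$ back in to perform the elimination.

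I expect the main obstacle to be the structural case analysis for $B(m,1)\to B(m-1,1)$: pinning down exactly which podes may coalesce while staying in the $B$-family, and ruling out every route to $B(m-1,1)$ other than through the more symmetric $A(m,0)$ or single-block limits (this is precisely why the hypothesis $m>2$ enters, since for $m=2$ the bulk coalescence \emph{does} reach $B(1,1)$, the case handled by Theorem~\ref{thm:no-continuous-theorem-1}). The second technical point is making ``finitely many $\E$'' rigorous, i.e.\ verifying that the degenerate locus where \eqref{squareplan} ceases to obstruct is the zero set of a nontrivial analytic function of $\E$ after eliminating the remaining block values and multipliers.
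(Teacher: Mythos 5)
Your structural case analysis is, by itself, essentially the paper's entire proof. The paper disposes of Theorem~\ref{thm:3} in one line, observing that \emph{no consideration of entropy is needed}: the symmetries are simply incompatible, in that a sequence of $B(m,1)$ graphons cannot converge to any $B(m-1,1)$ graphon when $m>2$, and a sequence of $A(n,0)$ graphons cannot converge to any $B(n-2,1)$ graphon. The content of that one line is exactly your degeneration analysis: coalescing two of the $m$ interchangeable podes forces the bulk diagonal and off-diagonal values to agree, which by the $S_m$ symmetry merges \emph{all} bulk podes (yielding $B(1,1)$, which for $m>2$ is not $B(m-1,1)$); merging the special pode with a bulk pode forces total collapse to a constant graphon; shrinking the special pode yields $A(m,0)$; and an $A(n,0)$ graphon, having all pode sizes pinned at $1/n$, can only degenerate to a constant graphon. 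Since a continuous transition requires a common limiting optimizer reachable from both phases, and no candidate exists, the theorem follows for \emph{all} $\E$, with no variational equations at all. Had you stopped after that case analysis, you would have reproduced the paper's argument.

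The rest of your proposal introduces a genuine gap and simultaneously weakens the conclusion. Your perfect-square identity bounds $u=g_{ii}-g_{ij}$ away from zero only \emph{while the multiplier $b$ and the measure $c$ stay bounded}; but at a candidate continuous transition the triangle multiplier is precisely the quantity that diverges --- this is step 1 of the paper's proofs of Theorems~\ref{thm:no-continuous-theorem-1} and~\ref{thm:no-continuous-theorem-2} --- so your claim that $u$ is bounded away from zero ``throughout a neighborhood of the boundary'' does not follow. Your proposed repair, that the degenerate loci ($b\to\infty$, $c\to 0$, higher-symmetry limits) are ``cut out by a nonzero analytic relation,'' is asserted rather than proved: exhibiting the analytic function and showing it is not identically zero is the hard content of step 3 in the paper's proofs of the other two theorems, and you do not carry it out. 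Worse, even if completed, that route can only deliver ``except at finitely many values of $\E$'' --- the caveat that appears in Theorems~\ref{thm:no-continuous-theorem-1} and~\ref{thm:no-continuous-theorem-2} but is deliberately absent from Theorem~\ref{thm:3}. So as written you prove a strictly weaker statement than the one asked. The scenario that drags you into this machinery --- both phases collapsing onto a common limit of still higher symmetry such as $B(1,1)$ or a constant graphon --- is not what Theorem~\ref{thm:3} asserts cannot happen (compare the paper's proof of Theorem~\ref{thm:no-continuous-theorem-1}, where ``continuous transition'' means the optimizers of one phase converge to a graphon of the other phase's type); dropping it, and with it the Euler--Lagrange scaffolding, leaves exactly the paper's proof.
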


To summarize (see Figure \ref{phase-diagram}) we have proven 
that transitions cannot be continuous
between: any two $B$ phases; any
$A(n,0)$ and $B(n-2,1)$ phases; any $B$ and $C$ phases except perhaps
at finitely many values of $\E$.
As noted earlier, transitions between $A$ and $C$ phases appear to
always be discontinuous, and transitions between any $A(n,0)$ and
$B(n-1,1)$ phases
appear to always be continuous. However, we currently lack a
proof for these last two claims, although in \cite{RRS2} there is a possible 
path to prove continuity for $A(2,0) \leftrightarrow B(1,1)$. 

For completeness we note that the two transitions across the
Erd\H{o}s-R\'{e}nyi curve, $A(2,0) \leftrightarrow
F(1,1)$ and $B(1,1)\leftrightarrow
F(1,1)$, are continuous: for $(\E,\T)$ on the Erd\H{o}s-R\'{e}nyi
curve the graphon with constant value $\E$ is easily seen to be the
unique entropy-optimizer, and the optimizers from each side must approach
it in $L^2$ and therefore in cut metric.

 \noindent{\bf Proof of Theorem \ref{thm:no-continuous-theorem-1}.}
The $C(1,2)$ phase is in the $C$ family and appears to the left of
the $A(3,0)$ phase, while $B(2,1)$ is in the $B$
family and appears to the right of $A(3,0)$. 
For purposes of this proof, however, they are indistinguishable. 
All that matters is that there are three ``podes'', with a symmetry 
swapping two of them.

The proof has three steps:
\begin{enumerate}
\item Showing that the Lagrange multipliers (see definition below) would have to diverge at 
a continuous transition.
\item Showing that for a bipodal optimizing graphon, divergent Lagrange 
multipliers can only occur at the ``natural boundary'' of the $B(1,1)$ phase,
namely at the minimum possible value of $\T$ achievable by a bipodal 
graphon for the given value of $\E$. 
\item Showing that on the natural boundary, there exists a tripodal graphon
with the given values of $(\E,\T)$ and with higher entropy than any 
bipodal graphon. That is, showing that the natural boundary of the $B(1,1)$ 
phase actually lies within a tripodal phase. This step requires that a certain
analytic function of $\E$ be nonzero. Since analytic functions (that
aren't identically zero) can only have finitely many roots in a compact 
interval, this step of the proof can break down at finitely many values
of $\E$. (Numerical examination of the analytic function
reveals that it does not have any roots at all for 
relevant values of $\E$,
making the ``all but finitely many'' caveat moot in practice.)
\end{enumerate}

We establish some notation. For any graphon $g$, we 
let $\E(g)$ and $\T(g)$ denote the densities
$t_H(g)$ where $H$ is an edge and a triangle, respectively.
For our $B(1,1)$ graphon, we let
\begin{equation} a = p_{11}, \qquad d=p_{12}, \qquad b= p_{22},
\end{equation}
and let $c$ be the size of the first pode. For a $B(2,1)$ graphon, we
assume that the first two podes are interchangable, each of size
$c/2$, and set $a_+ = p_{12}$, $a_- = p_{11} = p_{22}$,
$d=p_{13}=p_{23}$, and $b=p_{33}$. That is, the $B(2,1)$ graphon is
obtained from a $B(1,1)$ graphon by splitting the first pode in half (and
renumbering the last pode), and by making $p_{11}$ and $p_{12}$
distinct variables. The only way that a $B(1,1)$ graphon can be a limit
of $B(2,1)$ graphons is if $a_+ - a_-$ goes to zero (note that if $c\to 1$ it becomes a type $A(2,0)$ graphon,
not a $B(1,1)$). So to prove our
theorem, we must show that a sequence of $B(2,1)$ entropy maximizers
cannot approach a limiting graphon with $a_+ = a_-$.

The Euler-Lagrange equations for maximizing entropy (see \cite{KRRS1}) say that there exist constants $\alpha$
and $\beta$ such that, for all $(x,y) \in [0,1]^2$,
\begin{equation} 
\frac{\delta S(g)}{\delta g(x,y)} = \alpha \frac{\delta \E(g)}{\delta g(x,y)} + \beta \frac{\delta \T(g)}{\delta g(x,y)},
\end{equation}
or more explicitly
\begin{equation}
\label{EL1} 
S_0'(g(x,y)) 
= \alpha + 3 \beta  \int_0^1 g(x,z) g(y,z) dz, 
\end{equation} 
where 
\begin{equation}
S(g) = \int S_0(g(x,y))\,dxdy,\ \hbox{ and }\ S_0(u)=-u\log(u) - (1-u)\log(1-u). 
\end{equation}  
For a $B(2,1)$ graphon,
the integral $\int_0^1 g(x,z) g(y,z) dz$ equals 

\begin{equation}
\begin{cases}
\frac{c}{2} (a_+^2 + a_-^2) + (1-c) d^2, &\hbox{ for }x,y < \frac{c}{2} \hbox{ or } \frac{c}{2} < x,y < c;\cr 
c a_+ a_- + (1-c)d^2,  &\hbox{ for }x < \frac{c}{2} <y <c \hbox{ or } y < \frac{c}{2} < x < c;\cr
\frac{c}2 d (a_+ + a_-) + (1-c)bd,  &\hbox{ for }x < c < y \hbox{ or } y < c < x;\cr
cd^2 + (1-c)b^2, &\hbox{ for }x,y > c.
\end{cases}
\end{equation}

The Lagrange multiplier $\beta$ equals $\partial s/\partial \T$ and is 
always positive, since we can increase both the entropy and $\T$ by linearly
interpolating between our given graphon and a constant graphon. 

Subtracting equation (\ref{EL1}) for $(x,y) = (\frac{c}{3},\frac{2c}{3})$ from equation (\ref{EL1}) for 
$(x,y)=(\frac{c}{3}, \frac{c}{3})$ gives:
\begin{equation}
S_0'(a_-) - S_0'(a_+) = \frac{3\beta c}{2} (a_+ - a_-)^2.
\end{equation}
By the mean value theorem, the left hand side equals $-S_0''(u) (a_+
-a_-)$ for some $u$ between $a_-$ and $a_+$, and so
\begin{equation}
\beta = \frac{-2 S_0''(u)}{3 (a_+ - a_-)}.
\end{equation}
Since $\beta>0$ and 
$S_0''(u)=\frac{-1}{u(1-u)}$ is negative and bounded away from zero, 
$a_+$ must be greater than $a_-$ and $\beta$
must diverge to $+ \infty$  as $a_+ - a_-$ approaches zero. To
compensate, $\alpha$ must diverge to $-\infty$. This completes step 1.
  
To understand the effect of divergent Lagrange multipliers, we must
consider all of the variational equations for bipodal graphons,
including those related to changes in $c$. The edge, triangle and
entropy densities are:
\begin{eqnarray}
\E(g) & = & c^2 a + 2c(1-c) d + (1-c)^2 b \cr 
\T(g) & = & c^3 a^3 + 3c^2(1-c) ad^2 + 3c(1-c)^2 bd^2 + (1-c)^3 b^3 \cr 
S(g) & = & c^2 S_0(a) + 2c(1-c) S_0(d) + (1-c)^2 S_0(d). 
\end{eqnarray}
Taking gradients with respect to the four parameters $(a,b,c,d)$ and
setting $\nabla S = \alpha \nabla \E + \beta \nabla \T$ gives the four
equations:
\begin{eqnarray}
S_0'(a) & = & \alpha + 3 \beta (c a^2 + (1-c) d^2), \cr 
S_0'(b) & = & \alpha + 3 \beta (c d^2 + (1-c)b^2), \cr 
2 c S_0(a) \!+\! 2(1\!-\!2c) S_0(d) \!-\! 2(1\!-\!c)S_0(d) & = & \alpha(2ca + 2(1-2c)d -2(1-c)b) \cr 
+\ 3\beta (c^2 a^3 \!+\! (2c\!-\!3c^2)ad^2 \!&+&\! (3c^2\!-\!4c+1) bd^2 \!-\! (1\!-\!c)^2 b^3 ), \cr 
S_0'(d) & = & \alpha + 3\beta (cad + (1-c)bd).
\end{eqnarray}

Note that the left hand side of the third equation is always finite, and that
the left hand sides of the other equations only diverge if the 
relevant parameter
$a$, $b$, or $d$ approaches 0 or 1. Otherwise, in the $\beta \to \infty$ limit
the ratios of the coefficients
of $\beta$ and $\alpha$ must be the same for all equations. In other words,
there is a constant $\lambda = -\beta/\alpha$ such that 
\begin{equation} \nabla \T = \lambda \nabla \E,
\end{equation}
where we restrict attention to parameters that are not 0 or 1, and such
that a parameter $i\in\{a,b,d\}$ equals 0 if 
$\partial_i \T > \lambda \partial_i \E$
and equals 1 if $\partial_i \T < \lambda \partial_i \E$. (The signs 
of the inequalities come
from the sign of $S_0'(u)$ as $u \to 0$ or $u \to 1$.)

However, these are precisely the same equations that describe 
finding a stationary point for $\T$ for fixed $\E$, without regard to the 
entropy. For $B(1,1)$ graphons, such stationary points occur only for
Erd\H{o}s-R\'{e}nyi graphons, with $a=b=d$, or for minimizers of $\T$, 
with $d=1$ and $b=0$ and $c$ satisfying the algebraic condition
\be \label{min-t-eq}
0 = a^3c^2 + ac^2 + 2a^2c + 2(1-c) -4a^2c^2 -4c(1-c). \ee
This completes step 2. 

Finally, we must show that a $B(1,1)$ graphon with $b=0$ and $d=1$, and 
satisfying (\ref{min-t-eq}), is not an
entropy maximizer. Among bipodal graphons with $b=0$ and $d=1$ and a 
fixed value of $\E$, minimizing $\T$ and minimizing the entropy $s$ give
different analytic equations for $a$ and $c$. For all but finitely many values
of $\E$, these equations have distinct roots, implying that the graphon that
minimizes $\T$ does not maximize the entropy. If we start at the bipodal
graphon that minimizes $\T$ for fixed $\E$ and change $c$ to 
$c + \delta_c$,
while adjusting $a$ to keep $\E$ fixed, we can increase the entropy to
first order in $\delta_c$ while only increasing $\T$ to second order in 
$\delta_c$. 

To compensate for this increase in $\T$, we can split the first pode
in half, yielding a $B(2,1)$ tripodal graphon with
$d=1$ and $b=0$, with $a_+= a+\delta_a$ and $a_-=a-\delta_a$. 
This decreases $\T$ by $c^3\delta_a^3$, while
decreasing the entropy by $O(\delta_a^2)$. By taking $\delta_a$
to be of order $\delta_c^{2/3}$, we can restore the initial value of $\T$ at
an entropy cost of $O(\delta_c^{4/3})$.

For $\delta_c$ sufficiently small and of the correct sign, 
the $O(\delta_c^1)$ gain in entropy from
changing $c$ and $a$ is greater than the $O(\delta_a^2)=O(\delta_c^{4/3})$ 
cost in entropy
from having $a_+ \ne a_-$, so the resulting $B(2,1)$ graphon has higher entropy,
but the same values of $\E$ and $\T$, than the $B(1,1)$ graphon that minimizes $\T$ (among $B(1,1)$ graphons). This completes step 3.
\hfill$\square$

\noindent{\bf Proof of Theorem~\ref{thm:no-continuous-theorem-2}.} 
The proof follows the same strategy as that of the $m=1$ case.  In
step 1 we show that the Lagrange multipliers must diverge at a
continuous transition. In step 2 we show that divergent Lagrange
multipliers force a $B(m,1)$ graphon to be a stationary point of $\T$
for fixed $\E$. In step 3 we show that we can perturb such a
stationary $B(m,1)$ graphon into a $C(m,2)$ graphon with the same
values of $(\E,\T)$ and more entropy, implying that we are not
actually at the phase boundary, which is a contradiction.

We parametrize $C(m,2)$ graphons
as follows: There are two interchangable podes, each of size $c/2$, and $m$ podes of size $(1-c)/m$. 
Let $p_{ij}$ denote the value of $g(x,y)$ when $x$ is in the $i$-th pode and $y$ is in the $j$-th. We define parameters
$\{a_+, a_-, b, d, p\}$ such that
\begin{equation} p_{ij} = \begin{cases} a_+ &  \hbox{if }(i,j)=(1,2) \hbox{ or }(2,1), \cr
a_- &  \hbox{if }(i,j)=(1,1) \hbox{ or }(2,2), \cr
d &  \hbox{if }i \le 2 <j \hbox{ or } j \le 2 < i, \cr
b & \hbox{if } i=j>2, \cr
p & \hbox{if } i \ne j \hbox{ and } i,j>2.
\end{cases}
\end{equation}

The transition that we are trying to rule out is one where $a_\pm$ approach a common value $a$, resulting in a
$B(m,1)$ graphon with one pode of size $c$ and $m$ podes of size $(1-c)/m$, with
\begin{equation} p_{ij} = \begin{cases} a &  \hbox{if }(i,j)=(1,1), \cr
d &  \hbox{if }i =1  <j \hbox{ or } j =1 < i, \cr
b & \hbox{if } i=j>1, \cr
p & \hbox{if } i \ne j \hbox{ and } i,j>1.
\end{cases}
\end{equation}

Let $\nabla S$, $\nabla \E$ and $\nabla \T$ be the gradients of the functionals $S$, $\E$ and $\T$ with respect to the parameters
$(a,b,c,d,p)$ or $(a_+,a_-,b,c,d,p)$, depending on the phase we are considering. Maximizing the entropy for fixed $(\E,\T)$ means finding Lagrange multipliers $\alpha$ and $\beta$ such that 
\begin{equation}\label{EL2s} \nabla S = \alpha \nabla \E + \beta \nabla \T. \end{equation}
In the $C(m,2)$ phase, one checks that $\partial S/\partial a_- - \partial S/\partial a_+$ is first order in $(a_+-a_-)$, while 
$\partial \T/\partial a_- - \partial \T/\partial a_+$ is second order. This forces $\beta$ to diverge to $+\infty$ as 
$a_+ - a_- \to 0^+$, 
which in turn forces $\alpha$ to diverge to $-\infty$, exactly as in the proof of Theorem
\ref{thm:no-continuous-theorem-1}. This concludes step 1.

Next we consider equation (\ref{EL2s}) in the limit of divergent $\alpha$ and $\beta$. Restricting attention to
those parameters for which $\nabla S$ does not diverge (i.e. those that are not 0 or 1 in the limiting graphon), we
have that $\nabla \E$ and $\nabla \T$ must be collinear. That is, there is a constant $\lambda$ such that,
for each parameter $q \in (a,b,d,p)$ taking
values in $(0,1)$, we must have that $\partial \T /\partial q = \lambda \partial \E/\partial q$. 
Furthermore, if in the limit $q$ goes to 0 we must have $\partial \T/\partial q \ge \lambda \partial \E/\partial q$ and if 
$q$ approaches 1 we must have $\partial \T/\partial q \le \lambda \partial \E/\partial q$.  That is,
\begin{equation} \label{EL3s} \nabla \T = \lambda \nabla
  \E, \end{equation} where we restrict attention to parameters that
are not 0 or 1, and we have 1-sided inequalities for those remaining
parameters.  This is {\em precisely} the set of equations obtained by
ignoring entropy and looking for stationary points of $\T$ for fixed
$\E$, with $\lambda$ being the Lagrange multiplier of this process.
Seeking entropy maximizers with divergent $(\alpha, \beta)$ is
equivalent to seeking stationary points of $\T$ for fixed $\T$. This
concludes step 2.

Now we consider a 1-parameter family of graphons, with a fixed value of $\E$, that satisfy all of (\ref{EL3s}) 
except the equation relating $\partial \T/\partial c$ and $\partial \E/\partial c$. Since by assumption we start at 
a stationary point of $\T$, moving along this family will only
change $\T$ to second order or slower in the change in $c$, but for all but finitely many values of $\E$ will change $S$ to 
first order. Move a distance $\delta_c$ in the direction of increasing $S$. A priori we do not know that the resulting
change in $\T$ will be positive, but negative changes in $\T$ can be compensated for while {\em increasing} $S$, since $\beta$ is positive.
If the change in $\T$ is positive, then we can compensate by splitting the first pode in half, with $a_+-a_-$ of 
order $(\delta_c^{2/3})$, restoring $\T$ at an entropy cost of $O(\delta_c^{4/3})$. 
Since $O(\delta_c^1) > O(\delta_c^{4/3})$, by picking $\delta_c$ small enough we can
always find a $C(m,2)$ graphon that does better than the $B(m,1)$ graphon that was purportedly the entropy
maximizer at the phase boundary, which is a contradiction.  
\hfill$\square$

\noindent{\bf Proof of Theorem \ref{thm:3}.}
This proof does not require any consideration of entropy. 
The symmetries are simply incompatible, 
in that there is no way to approximate a
$B(m-1,1)$ graphon with a $B(m,1)$ graphon, and there is no way to approximate
a $B(n-2,1)$ graphon with an $A(n,0)$ graphon.  
\hfill$\square$
 
We now turn to determining the locations of the phase transitions. For each of the discontinuous transitions, this is a difficult 
problem. The optimizing graphons on each side of the transition line are very different, so it is impossible to use perturbation theory to understand the behavior near the line. Instead, one must study each phase separately and approximate the entropy in
each phase as an analytic funciton of $\E$ and $\T$ (e.g., by doing a polynomial fit to numerical data). These functions can then
be continued over a larger region and compared. The phase transition line is the locus where the two functions are equal. Using 
such techniques, we can localize the transition lines and the triple points with considerable accuracy, but in the end the results 
remain grounded in numerical simulation, and cannot provide independent confirmation of our numerics. 

For the continuous $A(n,0) \leftrightarrow B(n-1,1)$ transitions, however, it is possible to obtain an analytic equation satisfied
along the transition line. This was already done for the $A(2,0) \leftrightarrow B(1,1)$ transition in \cite{RRS1,RRS2}. Here we extend
the results to $n>2$. 

This calculation, although elementary, is too long to present here in
its entirety; we give the method here.  For each fixed $n>2$, the
space of $B(n-1,1)$ graphons is 5-dimensional. As in Figure 3, we
imagine $n-1$ intervals of size $\frac{c}{n-1}$ and one of size $1-c$,
and must specify $a=p_{1,1}$, $b=p_{1,2}$, $d=p_{1,n}$, $p=p_{n,n}$
and $c$. An $A(n,0)$ graphon is a special case of this with $p=a$,
$d=b$ and $c=\frac{n-1}{n}$, and for such graphons the parameters $a$
and $b$ are easily computed from the edge and triangle densities:
\begin{equation}\label{a-and-b}
a = \E - (n-1) \left(\frac{\E^3 - \T}{n-1}\right )^{1/3}; \qquad  
b = \E +  \left(\frac{\E^3 - \T}{n-1}\right )^{1/3}.
\end{equation} 
Returning to a general $B(n-1,1)$ graphon, we use the constraints on
$\E$ and $\T$ to eliminate two variables, expressing $d,p$  as functions of $a,b,c$: $d=d(a,b,c), p=p(a,b,c)$.
In fact we don't need to solve explicitly; we only need the first and second partial derivatives of $d(a,b,c),p(a,b,c)$
(evaluated at the parameter values of the $A(n,0)$ graphon), which can be obtained by implicit differentiation of the equations for $\E,\T$. 
Then the entropy $S$ is a function of $a,b,c$: $S=S(a,b,c,d(a,b,c),p(a,b,c)).$ Computing its Hessian
$H(S)$ at the $A(n,0)$ phase when $d=b,p=a$ yields
the matrix
\begin{equation}
H(S)=\begin{pmatrix}
 \frac{(n-1) \left((a-b)^2-4 (a-1) a^2 X\right)}{(a-1) a (a-b)^2 n} & -\frac{2 b (n-2) (n-1) X}{(a-b)^2 n} & -\frac{2 (a+b) X}{a-b} \\
 -\frac{2 b (n-2) (n-1) X}{(a-b)^2 n} & \frac{(n-2) (n-1) \left((a-b)^2-2 (b-1) b (2 a+b (n-4)) X\right)}{2 (a-b)^2 (b-1) b n} & -\frac{2 b (n-2) X}{a-b} \\
 -\frac{2 (a+b) X}{a-b} & -\frac{2 b (n-2) X}{a-b} & \frac{2 n (\log (1-b)-\log (1-a))}{n-1}
\end{pmatrix}
\end{equation}
where $X=\tanh^{-1}(a)-\tanh^{-1}(b)$.  Within the $A(n,0)$ phase,
this second variation is negative-definite, while within the
$B(n-1,1)$ phase the matrix has a positive eigenvalue. The boundary is
thus defined (locally) by the analytic equation $\det H(S)=0$.

One can also consider continuous changes from an $A(n,0)$ graphon to a 
graphon with a symmetry other than $B(n-1,1)$. The local stability condition 
for such changes works out to be exactly the same as for 
$A(n,0) \leftrightarrow B(n-1,1)$. The upshot is that the analytic curve
$\det H(S)=0$ defines the boundary of the region where the $A(n,0)$ graphon
is stable against small perturbations. 

In Figure \ref{FIG:Stability-regions} we plot the regions where the 
$A(3,0)$ and $A(4,0)$ graphons are stable against small perturbations.
Comparing to Figures 6 and 7, we see that the stability regions are larger
than the actual $A(3,0)$ and $A(4,0)$ phases, and even overlap! 
Without assuming {\em anything} about the phase portrait (beyond the
existence of $A(3,0)$ and $A(4,0)$ phases) this proves that 
some of the transitions from $A(3,0)$ or $A(4,0)$ to other phases must not
be governed by the local stability condition, and so must be discontinuous. 

\begin{figure}[!htb]
\centering
\includegraphics[width=0.45\textwidth,height=0.41\textwidth]{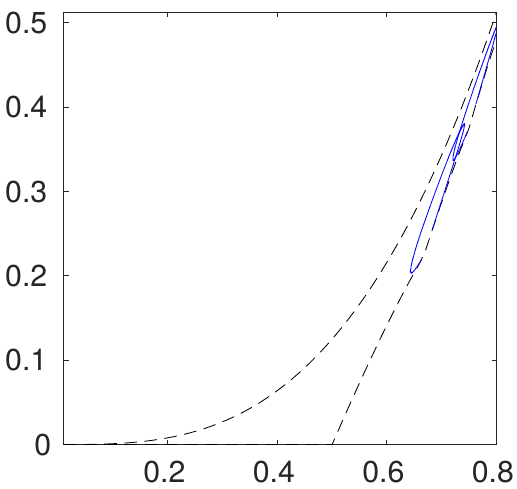} \hskip 1cm
\includegraphics[width=0.45\textwidth,height=0.41\textwidth]{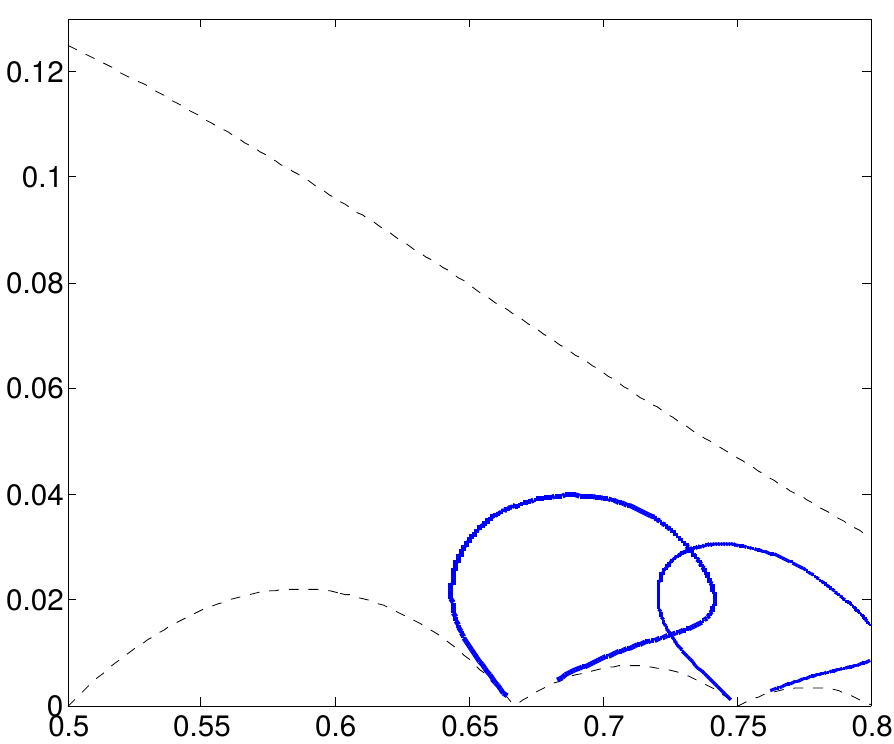}
\caption{The regions of local stability for $A(3,0)$ and $A(4,0)$
  graphons, showing overlap.
The left plot uses coordinates $(\E,\T)$ and the right plot uses
$(\E, \T'=\T-\E(2\E-1))$.}
\label{FIG:Stability-regions}
\end{figure}

The transition $A(2,0) \leftrightarrow B(1,1)$ is a supercritical pitchfork
bifurcation \cite{RRS2}. The analytic family of entropy maximizers in the 
$A(2,0)$ phase has a natural analytic continuation into the $B(1,1)$ region, 
but no longer maximizes entropy there. The analytic family of entropy maximizers
in the $B(1,1)$ phase doubles back on itself and cannot be continued into
the $A(2,0)$ region. Power series analysis suggests 
that something similar happens in 
the transition from $A(n,0)$ to $B(n-1,1)$. The family of graphons representing
$A(n,0)$ entropy maximizers can be continued into the $B(n-1,1)$ region but
no longer maximizes entropy. The family of graphons representing $B(n-1,1)$ 
cannot be analytically continued into the $A(n,0)$ region, but instead 
doubles back into the $B(n-1,1)$ region. Unlike in the case of $B(1,1)$, the
``doubling back'' branch is not related by symmetry to the original branch,
and so represents a different set of reduced graphons, all stationary
points of the entropy, but with presumably lower entropy than graphons of the
original branch with the same values of $(\E,\T)$.   

It is a classical result (Mantel's theorem~\cite{Ma}, generalized by
Tur\' an~\cite{Tu}) that the only way to satisfy the constraints of
edge density $\E=1/2$ and triangle density $\T=0$ is with the complete,
balanced bipartite graph, which implies that those values of those
density constraints determine the values of the densities of all other
subgraphs. Put another way, there is a {\em unique} reduced graphon $g$ with
$\T(g)=0$ and $\E(g)=1/2$.  Likewise, for each $\E$ there is a unique
reduced graphon with the maximum possible value of $\T=\E^{3/2}$. 
This phenomenon is called `finite forcing'; see~
\cite{LS}. However, this phenomenon only occurs on the boundary of the
phase space:

\begin{theorem}For each pair $(\E_0,\T_0)$ in the interior of the space of
achievable values (see Figure~\ref{Razborov-triangle}), there exist
multiple inequivalent graphons $g$ with $\E(g)=\E_0$ and $\T(g)=\T_0$. 
\end{theorem}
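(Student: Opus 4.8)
The plan is to show that edge and triangle densities fail to determine the reduced graphon at any interior point by producing a second realizer that differs in the density of the $4$-cycle $C_4$. Since the family of all subgraph densities is separating on $\tilde \G$, two graphons with the same $(\E,\T)$ but different $t_{C_4}$ are automatically inequivalent. Thus it suffices, for each interior $(\E_0,\T_0)$, to exhibit a graphon $g_0$ with $\E(g_0)=\E_0$ and $\T(g_0)=\T_0$ together with a one-parameter family $g_t$ through $g_0$ along which $\E$ and $\T$ are held constant while $t_{C_4}$ is strictly monotone; any $g_t$ with $t\ne 0$ small then serves as the second realizer.

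First I would set up the first-order calculus of the three densities exactly as in the Euler--Lagrange computations above. For a symmetric perturbation $h$ one has $D\E(g_0)[h]=\langle 1,h\rangle$, $D\T(g_0)[h]=3\langle K_2,h\rangle$ and $Dt_{C_4}(g_0)[h]=4\langle K_3,h\rangle$, where $\langle\cdot,\cdot\rangle$ is the $L^2([0,1]^2)$ inner product, $K_2(x,y)=\int_0^1 g_0(x,z)g_0(z,y)\,dz$ is the codegree kernel, and $K_3(x,y)=\int g_0(x,u)g_0(u,v)g_0(v,y)\,du\,dv$. A perturbation changing $t_{C_4}$ while fixing $\E,\T$ to first order exists precisely when
\be
K_3 \notin \mathrm{span}\{\,1,\ K_2\,\},
\ee
which I call condition (C); and to upgrade the first-order direction to an exact family $g_t$ via the implicit function theorem I need $1$ and $K_2$ independent, i.e.\ $K_2$ non-constant, which I call condition (R). Viewing $g_0$ as a self-adjoint Hilbert--Schmidt operator, a short spectral argument shows $K_2$ is constant only when $g_0^2$ is rank one, forcing $g_0$ to be a constant graphon; hence (R) holds automatically at every non-constant realizer, in particular at every realizer of an interior point off the Erd\H{o}s--R\'enyi curve $\T=\E^3$.

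It remains to produce a realizer satisfying (C). For $(\E_0,\T_0)$ with $|\T_0-\E_0^3|$ small I would use the explicit family $g_0=\E_0+s_1\,\phi_1\otimes\phi_1+s_2\,\phi_2\otimes\phi_2$, where $\phi_1,\phi_2$ are bounded, orthonormal and orthogonal to the constants and $\phi\otimes\phi$ denotes $\phi(x)\phi(y)$. Then $\E(g_0)=\E_0$ while $\T(g_0)=\E_0^3+s_1^3+s_2^3$, since the trace of the operator cube is the sum of the cubes of the eigenvalues $\{\E_0,s_1,s_2\}$; choosing $s_1^3+s_2^3=\T_0-\E_0^3$ with $s_1\ne s_2$ small enough that $g_0$ stays in $[0,1]$ gives a realizer, and (C) holds exactly because $s_1\ne s_2$ (in the eigenbasis $1,K_2,K_3$ are diagonal with entries $1,s_i^2,s_i^3$, and $s\mapsto s^3$ is affine in $s^2$ on $\{s_1,s_2\}$ only if $s_1=s_2$). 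This disposes in particular of the entire Erd\H{o}s--R\'enyi curve, the delicate case, by taking $s_2=-s_1\ne 0$.

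The main obstacle is the remaining interior points, those far from $\T=\E^3$ (for instance near the scalloped lower boundary for $\E>1/2$), which are realized only by genuinely multipodal graphons and are not reached by the explicit rank-two family. Here I would pick any realizer $g_0$; it is non-constant, so (R) holds and the $(\E,\T)$-fiber through $g_0$ is, near $g_0$, an $L^2$-submanifold of codimension two, hence infinite dimensional. If (C) failed identically on this fiber then $t_{C_4}$ would be locally constant along it, forcing $K_3\in\mathrm{span}\{1,K_2\}$, i.e.\ $g_0^3-\beta g_0^2=\alpha\,(\mathbf 1\otimes\mathbf 1)$; thus $g_0^2(g_0-\beta I)$ has rank at most one, so $g_0$ has at most two distinct nonzero eigenvalues (and is degree-regular in the rank-one case). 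Such rigid graphons form a set of infinite codimension inside the fiber, so the fiber cannot consist entirely of them, and any realizer outside this thin locus satisfies (C). Making this last escape step fully rigorous, controlling the fiber as a manifold and the degenerate locus within it, is the crux of the proof.
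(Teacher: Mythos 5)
Your overall strategy is legitimate (the subgraph densities separate points of $\tilde \G$, so exhibiting two realizers with different $t_{C_4}$ would indeed suffice), and your rank-two construction near the Erd\H{o}s--R\'enyi curve is essentially correct. But the step you yourself flag as the crux --- the ``escape'' argument far from the curve --- is a genuine gap, and the justification you sketch for it is not merely unfinished but wrong as stated. Your claim is that rigid graphons (those with $K_3 \in \mathrm{span}\{1,K_2\}$) have infinite codimension in the fiber. They do not: for any $k\ge 1$, set $\beta = \left((\T_0-\E_0^3)/k\right)^{1/3}$ and take $g = \E_0\, \mathbf{1}\otimes\mathbf{1} + \beta P$, where $P$ is any rank-$k$ orthogonal projection onto a subspace of $\mathbf{1}^{\perp}$ spanned by bounded functions, scaled so that $g$ takes values in $[0,1]$. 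Since $P\mathbf{1}=0$, one gets $\E(g)=\E_0$ and (viewing $g$ as an integral operator, whose cube has trace $\T(g)$) $\T(g)=\E_0^3+k\beta^3=\T_0$, so $g$ lies in the fiber; and $g^3-\beta g^2 = \E_0^2(\E_0-\beta)\,\mathbf{1}\otimes\mathbf{1}$, so $g$ is rigid in exactly your sense. As $P$ ranges over the Grassmannian of admissible $k$-planes, this is an \emph{infinite-dimensional} family of rigid graphons lying entirely inside the fiber. The rigid locus is thin only in the sense that it consists of finite-rank kernels, not in any codimension sense, so your dimension-counting escape cannot be made rigorous as written; it would have to be rebuilt on the rank dichotomy instead (rank is an invariant of reduced graphons, since the spectrum is determined by the cycle densities $t_{C_k}$, $k\ge 3$; rigid graphons have finite rank; and the fiber, locally a codimension-two submanifold, contains infinite-rank points --- at which point you either already have two inequivalent realizers or a point where your condition (C) holds). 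A second unaddressed problem: in the far region you ``pick any realizer $g_0$,'' but realizers there can take the values $0$ and $1$ on sets of positive measure (the $\T$-minimizers with $d=1$, $b=0$ do), and there the one-sided admissibility constraints invalidate the implicit-function-theorem picture of the fiber as a manifold; you would need to construct a realizer valued in $(0,1)$, or justify restricting perturbations to the region where $g_0$ is interior-valued.

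For contrast, the paper proves the statement with none of this machinery and uniformly over the whole interior: fix $\E_0$, let $g_0$ minimize and $g_1$ maximize $\T$ at edge density $\E_0$, and interpolate $g_{t,\phi} = t\,g_0 + (1-t)\,(g_1\circ\phi)$ over measure-preserving homeomorphisms $\phi$. Linearity of $\E$ in $g$ keeps the edge density at $\E_0$ for all $t$; continuity of $\T(g_{t,\phi})$ in $t$, running between $\T_{\min}(\E_0)$ and $\T_{\max}(\E_0)$, gives realizers of any interior $\T_0$ by the intermediate value theorem; and choosing $\phi_1,\phi_2$ so that the interpolants have different pode-size structures makes the two realizers inequivalent, since the multiset of pode sizes is an invariant of the reduced graphon. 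Your $t_{C_4}$ plays the role of the paper's podal-structure invariant, but the paper builds inequivalence into an explicit global construction, whereas you try to extract it from a local transversality argument --- which is precisely the part that fails to close far from the Erd\H{o}s--R\'enyi curve.
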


\begin{proof}
For fixed $\E_0$, let $g_0$ be a graphon that minimizes $\T(g)$ given
$\E(g)=\E_0$, and let $g_1$ be a graphon that maximizes $\T(g)$, and 
let $\phi: [0,1] \to [0,1]$ be a general measure-preserving homeomorphism. 
The graphon $g_0$ is always $n$-podal for some $n$, while $g_1$ is 
bipodal. For $t \in (0,1)$, let $g_{t,\phi} = t g_0 + (1-t) g_1 \circ \phi$. 
This will be a multipodal graphon, generically with $2n$ podes whose sizes 
depend on the details of $\phi$ but not on the value of $t$. In particular,
we can choose homeomorphisms $\phi_1$ and $\phi_2$ such that the podal 
structure of $g_{t,\phi_1}$ is different from that of $g_{t,\phi_2}$, and hence
is different from that of $g_{t',\phi_2}$ for arbitrary $t' \in (0,1)$.

It is easy to check that $\E(g_{t,\phi}) = t \E(g_0) + (1-t) \E(g_1\circ \phi)
= \E_0$, and that for given $\phi$, $\T(g_{t,\phi})$ is a continuous function 
of $t$. By the intermediate
value theorem, we can thus find graphons $g_{t_1,\phi_1}$ 
and $g_{t_2,\phi_2}$ such that 
$\E(g_{t_1,\phi_1})=\E(g_{t_2,\phi_2})=\E_0$ and 
$\T(g_{t_1,\phi_1})=\T(g_{t_2,\phi_2})=\T_0$. But 
$g_{t_1,\phi_1}$ and $g_{t_2,\phi_2}$ have different podal structures, and so 
are inequivalent. 
\end{proof}

In contrast to the graphons not being determined by $\E$ and $\T$,
simulations indicate that maximizing the entropy for fixed $(\E,\T)$
does give a unique reduced graphon {\it throughout the interior,
  except  on a lower dimensional set}, the exceptions being
constraint values associated with the discontinuous phase transitions.
(The uniqueness of the entropy maximizer was also proven analytically
for $(\E,\T)=(1/2,\T)$ for any $0\le \T\le 1/8$ in~\cite{RS1}, and for
an open subset of the $F(1,1)$ phase in~\cite{KRRS2}.)  We do not yet
have a theoretical understanding of this fundamental issue, sometimes
called the Gibbs phase rule in physics; see however~\cite{KRRS1} for
$k$-star graph models, and~\cite{Ru2,I} for weak versions in physics.

\old{In some sense this uniqueness is in the spirit of finite forcing, in
that the values of a finite set of functionals $\{\E, \T, S \}$ picks
out a single graphon from an infinite-dimensional family. However,
this only seems to occur on the boundary of the space of achievable
values of $(\E, \T, S)$, specifically the constrained maximum value of
$S$.}

\section{Symmetry}
\label{SEC:symmetry}

All aspects of this paper relate to the symmetry of phases, referring
to the symmetry of the unique entropy-optimizing graphons $g_{\E,\T}$
for points $(\E,\T)$ in the phase. These symmetries occur at two
levels. 

The first and most significant level of symmetry is a consequence of the multipodality of
the $g_{\E,\T}$, which means that the set of all nodes is composed of
a finite number of equivalence classes: the probability of an edge,
between a node $v_j$ in class $j$ with a node $v_k$ in class $k$, is
independent of $v_j$ and $v_k$, only depending on $j$ and $k$. 

The second level
of symmetry concerns the equivalence classes of nodes:
certain equivalence classes have the same sizes and edge probabilities, and others don't.  
These size and probability parameters are used to distinguish distinct phases, that is,
maximal open regions in the parameter space where the entropy-maximizing graphon $g_{\E,\T}$ is unique and varies analytically.

Because of multipodality the function $g_{\E,\T}$ restricted to a given phase can be
considered a smooth vector valued function of $(\E,\T)$, the
coordinates being the probabilities of edges between node equivalence
classes, and the relative sizes of those equivalence classes.
By the symmetry of a phase we refer to the symmetries among these
coordinates, with the following caveat, illustrated through an example. Within the phase
$F(1,1)$ there is a curve such that the two node equivalence classes
have the same size. In a narrow sense this might have signalled a
higher symmetry. However there is no singular behavior as $(\E,\T)$
crosses this curve so the curve is simply part of $F(1,1)$ and does
not affect the `symmetry' of the phase. 

Each of the phases has a different symmetry and we
conjecture that they all fall into the three families: $A(n,0)$,
$B(n,1)$ and $C(n,2)$,  or $F(1,1)$, in which the notation completely specifies the
symmetry (except for the pairs $F(1,1),\ B(1,1)$, and $C(1,2),\ B(2,1)$).

An important result of this paper is the conjectured phase diagram,
Figure~\ref{phase-diagram}. The other goal is to show how knowledge of the
structure of optimizing graphons in phases can be helpful in
understanding the role of symmetry in specific features of phase
transitions. 
Consider the following, paraphrased from P.W.~Anderson~\cite{An}, a picture he attributes to Landau~\cite{LL}:

\leftskip=0.5truein
\rightskip=0.5truein
\noindent
  The First Theorem of solid-state physics states that it is
  impossible to change symmetry gradually. A symmetry element is
  either there or it is not; there is no way for it to grow
  imperceptibly.

\leftskip=0truein
\rightskip=0truein

\noindent
This intuitive picture has been applied, for instance by Landau, to
understand why there is no critical point for the fluid/solid
transition~\cite{An}, though it has been difficult to make the
argument rigorous:

\leftskip=0.5truein
\rightskip=0.5truein
\noindent
  This is the theoretical argument, which has appeared to some to be a
  little too straightforward to be absolutely convincing~\cite{Pi}.

\leftskip=0truein
\rightskip=0truein

\noindent
We suggest that network models
such as the edge/triangle model of this paper provide a useful
framework for enabling a rigorous study of such symmetry
principles. This was done in~\cite{RRS2} specifically for the issue of
existence of a critical point. 

Landau's symmetry principles are commonly applied to the issue of
whether phases are continuous at a transition~\cite{LL},
which is also related to uniqueness of entropy optimizers. 
 
We have proven that in the edge/triangle model certain transitions cannot be
continuous, and evidence suggests that certain other transitions are
continuous. 
It is worthwhile discussing how these transitions are approached at the micro-level, that
is, in terms of the multipodal parameters, the probabilities of edges
between various type of nodes. In this regard Figure~\ref{phase-diagram}
and Figure~\ref{FIG:Line E0p735} are useful.

For all the discontinuous transitions, those proven and those only
seen in simulation, we believe from simulation that the transitions can
be visualized as the intersection of a pair of two dimensional smooth
surfaces both of which exist beyond the intersection but only
represent entropy-optimizers on one side. See rows $(i),
(iv),(v),(vi)$ in Figure~\ref{FIG:Line E0p735}.

For the continuous transitions there is more variety. By simulation
the continuous $B(2,1)\leftrightarrow A(3,0)$ transition is achieved
through directly acquiring the higher symmetry of $A(3,0)$. See rows
$(ii)$ and $(iii)$ in Figure~\ref{FIG:Line E0p735}. An analogue in
statistical mechanics would be a transition between crystal phases in
which a rhombohedral unit cell becomes, and remains, cubic. On the other hand the
$A(2,0)\leftrightarrow F(1,1)$ transition occurs by the different 
bipodal symmetries on both
sides rising to the full symmetry of the constant graphons at the
Erd\H{o}s-R\'{e}nyi curve. It is noteworthy that the full symmetry of
the constant graphon is incompatible with any possible phase in our sense: since $\E=\T^3$, there
is not a two-dimensional family of parameter values.
\section{Conclusion}
\label{SEC:conclusion}
The edge/triangle model in this paper was built by
analogy with microcanonical mean-field models in statistical
mechanics. Mean-field models are useful because frequently the free
energy can be determined analytically as a function of the
thermodynamic parameters.  An important distinction for the
edge/triangle and related random graph models is that not only the free energy
(entropy in this case) but also the entropy-optimizing graphons (which are the
analogues of the Gibbs states) can sometimes be determined for a range of
parameters. For instance in~\cite[Section 3.7]{RRS2}
this control of the optimizing states is used to compute how some
global quantity changes with the constraint parameters, a level of
analysis never possible in short or mean-field models in
statistical mechanics.

The main result of this paper is the conjectured phase
diagram, Figure~\ref{phase-diagram}, for edge/triangle constraints,
based largely on simulation, including continuity/discontinuity of all
the transitions and the structure of the entropy-optimizing states
within each phase.

The secondary goal is to show how knowledge of the structure of
the optimizing graphons can be helpful in understanding the role of
symmetry in the continuity/discontinuity of phase transitions.

Interesting subjects for further investigation include the triple points where 
phases $F(1,1)$, $A(2,0)$ and $B(1,1)$ meet, all the pairwise transitions
being continuous, and where $B(1,1)$, $A(3,0)$
and $B(2,1)$ meet, with $B(1,1)\leftrightarrow A(3,0)$ and $B(2,1)
\leftrightarrow B(1,1)$ discontinuous and $A(3,0)\leftrightarrow
B(2,1)$ continuous.

\section*{Acknowledgments}

The main computational results were obtained on the
computational facilities in the Texas Super Computing Center
(TACC). We gratefully acknowledge this computational support.
This work was also partially supported by NSF grants DMS-1208191, DMS-1612668,
DMS-1509088, DMS-1321018 and DMS-1620473, and Simons Investigator grant  327929.



\end{document}